\DeclareMathAlphabet{\mathpzc}{OT1}{pzc}{m}{it}
\newtheorem{theorem}{Theorem}[section]
\newtheorem{lemma}[theorem]{Lemma}
\newtheorem{proposition}[theorem]{Proposition}
\newtheorem{corollary}[theorem]{Corollary}
\theoremstyle{definition}
\newtheorem{definition}[theorem]{Definition}
\newtheorem{remark}[theorem]{Remark}
\numberwithin{equation}{section}
\acrodef{LDP}{Large Deviation Principle}
\newcommand{\red}{\textcolor{red}}
\newcommand{\be}{\begin{equation}}
\newcommand{\ee}{\end{equation}}
\newcommand{\bea} {\begin{array}{rl}}
\newcommand{\eea} {\end{array}}
\newcommand{\bepa}{\left\{ \begin{array}{l}}
\newcommand{\eepa} {\end{array}\right.}
\newcommand{\R}{\mathbb{R}} 
\newcommand{\Z}{\mathbb{Z}} 
\newcommand{\e}{\varepsilon}
\newcommand{\T}{\mathbb{T}}
\renewcommand{\bar}{\overline}
\newcommand{\bx}{\bm{x}}
\newcommand{\bX}{\bm{X}}
\title{Singular Perturbations of Hamilton-Jacobi equations in the Wasserstein space}
\author[A.\ Zitridis]{Antonios Zitridis}
\address{A.\ Zitridis,
	Department of Mathematics, University of Chicago,
	\newline\hphantom{\quad \ \ A. Zitridis}
	5734 S.~University Avenue, Chicago, Illinois 60637 USA
}
\email{zitridisa@uchicago.edu}
\begin{document}       
	\begin{abstract}
We study a singular perturbation problem for second-order Hamilton-Jacobi equations in the Wasserstein space. Specifically, we characterize the behavior of the solutions as the perturbation parameter $\e$ tends to zero. The notion of solution we adopt is that of viscosity solutions in the sense of test functions on the Wasserstein space. Our proof utilizes the perturbed test function method, appropriately adapted to this setting. Finally, we highlight a connection with the homogenization of conditional slow-fast McKean-Vlasov stochastic differential equations.
	\end{abstract}

	
	\maketitle
	{
		}
 \vspace{-8mm}
\section{Introduction}

\noindent
\subsection{Background}
Let $\e, T>0$, $d\in \mathbb{N}^*$, $A$ a compact subset of $\R^{2d}$ and $(B^k)_{k=0,1,2}$ independent $d$-dimensional Brownian motions defined on a fixed filtered probability space $(\Omega,\mathcal{F},\mathbb{F}=(\mathcal{F}_t)_{t\ge 0},\mathbb{P})$. We consider the system of controlled conditional slow-fast McKean-Vlasov stochastic differential equations (SDEs for short) on the $d$-dimensional torus $\T^d=\R^d/\Z^d$
\be\label{eq:slow-fastMcKeanVlasov}
\begin{cases}
dX^{\e}_t &=  c(X^{\e}_t,Y^{\e}_t,\alpha_t,\mathcal{L}^0(X_t^\e))dt + \sqrt{2}\sigma_1 dB^1_t+\sqrt{2}\sigma_{0,1}dB^0_t,\\
dY^{\e}_t & = \frac{1}{\e}f(X^{\e}_t,Y^{\e}_t,\alpha_t,\mathcal{L}^0(X_t^{\e}))dt + \frac{\sqrt{2}}{\sqrt{\e}}\sigma_2 dB^2_t+\frac{\sqrt{2}}{\sqrt{\e}}\sigma_{0,2}dB^0_t,
\end{cases}
\ee
where $(\mathcal{F}_t^{0})_{0\le t\le T}$ is the filtration generated by $B^0$, $\mathcal{L}^0(X_t)$ is the law of $X_t$ conditioned upon $\mathcal{F}_t^0$, $\alpha_t$ is an adapted process with values in $A$ and where $f,c:\T^{d}\times\T^d\times  A\rightarrow \R^d$, $\sigma_1,\sigma_2,\sigma_{0,1},\sigma_{0,2}$ are $d\times d$ matrices. Here $B^1,B^2$ play the role of the idiosyncratic noise, while $B^0$ is the common noise.
\vspace{1mm}

\noindent
We also introduce the functions $\mathcal{G}:\mathcal{P}(\T^d)\rightarrow \R$ \footnote{We use the notation $\mathcal{P}(X)$ for the Wasserstein space, that is the space of probability measures over a Polish space $X$ equipped with the Wasserstein distance. We refer to Appendix \ref{appendix A}.}, $L:\T^{d}\times\T^d\times A\times\mathcal{P}(\T^d)\rightarrow \R$ and we let $(t_0,m_0)\in [0,T]\times\mathcal{P}(\T^{2d})$. The aim of this paper is to study the behavior as $\e\rightarrow 0$ of the mean field control problem
\begin{align} 
U^{\e}(t_0,m_0):=\inf_{\alpha\in \mathcal{A}}\mathbb{E}\bigg[\int _{t_0}^TL&\left(X_t^{t_0,\e,m_0},Y_t^{t_0,\e,m_0},\alpha_t,\mathcal{L}^{0}(X_t^{t_0,\e,m_0})\right)dt
+\mathcal{G}\left(\mathcal{L}^{0}(X_T^{t_0,\e,m_0})\right)\bigg], \label{optimalc}
\end{align}
where the infimum is taken over $\mathcal{A}$ the set of all controls, i.e. the set of all $\mathbb{F}$-adapted processes $\alpha$ with values in the compact set $A$ and $(X_t^{t_0,\e,m_0},Y_t^{t_0,\e,m_0})$ satisfies \eqref{eq:slow-fastMcKeanVlasov} with initial condition on the law $\mathcal{L}((X_{t_0}^{t_0,\e,m_0}, Y_{t_0}^{t_0,\e,m_0}))=m_0\in \mathcal{P}(\T^{2d})$.
\vspace{2mm}

\noindent
The motivation of our problem comes from the study of the behavior of slow-fast systems such as \eqref{eq:slow-fastMcKeanVlasov} (as $\e\rightarrow 0$) in the absence of controls ($c,f$ do not depend on $\alpha$). This is a topic of intense research and we cite the initial works \cite{pardoux2001poisson,Pardoux2}, which were later generalized in \cite{rockner2021diffusion, rockner2021strong, li2022poisson, qiao2022efficient, hong2025diffusion,bezemek2023rate,hou2024asymptotic}, in the case without common noise ($\sigma_{0,1}=\sigma_{0,2}=0$) and when the coefficients depend on the law of the slow variable. The case of non-zero $\sigma_{0,1},\sigma_{0,2}$ was addressed in \cite{zitridis2023homogenization}. Similar problems were also studied in \cite{bardi2023singular, bensoussan2005singular, kumar2012singular}, in different settings and for initial conditions in \eqref{eq:slow-fastMcKeanVlasov} in $\R^{2d}$, that is $(X_{t_0}^\e,Y_{t_0}^\e)=(x,y)\in\R^{2d}$. Listing all the related literature is beyond the scope of this paper.
\vspace{1mm}

\noindent
Staying within the framework where there are no controls, a crucial tool in analyzing the behavior of $\mathcal{L}(X_t^{\e})$ in \cite{bezemek2023rate,zitridis2023homogenization} is the partial differential equation (PDE) satisfied by $U^{\e}$ from \eqref{optimalc} with $L=0$, which is $\mathcal{G}\left( \mathcal{L}(X^{t_0,\e,m_0}_T)\right)=:V^{\e}(t_0,m_0)$. This is a linear PDE over the Wasserstein space $\mathcal{P}(\T^{2d})$ and, under suitable regularity assumptions on $c,f,\mathcal{G}$, it was shown that $V^{\e}$ is its unique classical solution. In \cite{zitridis2023homogenization}, this enabled the derivation of the asymptotic behavior of $V^{\e}$ as $\e \to 0$ via a perturbation argument.
\vspace{2mm}

\noindent
In the case where $c,f$ in \eqref{eq:slow-fastMcKeanVlasov} depend on the controls $\alpha\in \mathcal{A}$, the PDE satisfied by $U^{\e}$ is no longer linear and is the Hamilotn-Jacobi equation posed in $[0,T]\times \mathcal{P}(\mathbb{T}^{2d})$ \footnote{The precise definition of  solution is provided in the next section. $D_mU$ denotes the Wasserstein derivative; we refer to Appendix A.}
\be \label{HJ1}
\begin{cases}
    -\partial_t U^{\e} +\int_{\mathbb{T}^{2d}}H^{\e}\left(x,y,D_mU^{\e}(t,m,x,y),\pi^1_*m\right)m(dx,dy)\\
    \hspace{1cm}-\int_{\mathbb{T}^{2d}}\text{tr}\left((\sigma^{\e}\sigma^{\e\top}+\sigma_0^{\e}\sigma_0^{\e\top})D_{(x,y)}D_mU^{\e}(t,m,x,y)\right)m(dx,dy)\\
    \hspace{1.2cm} -\int_{\T^{2d}}\int_{\T^{2d}}\text{tr}\left( \sigma_0^{\e}\sigma_0^{\e\top}D^2_{mm}U^{\e}(t,m,x,y,x',y')\right)m(dx,dy)m(dx',dy')=0,\\
    U^{\e}(T,m)=\mathcal{G}(\pi^1_*m),
\end{cases}
\ee
where for $m\in\mathcal{P}(\T^{2d})$, $\pi^1_*m\in \mathcal{P}(\mathbb{T}^d)$ is the the $x-$marginal of $m$ \footnote{This means that $\pi^1_*m(A)=m(A\times\T^d)$, for any measurable $A\subset \T^d$.}, $H:\mathbb{T}^d\times \mathbb{T}^d\times\R^{d}\times\R^d\times \mathcal{P}(\T^d)\rightarrow \R$ is the Hamiltonian \footnote{We denote by $a\cdot b$ the dot product of $a,b\in \R^d$.}
\be\label{Hamiltonian}
H(x,y,p,q,\mu)=\sup_{\alpha\in A}\left\{ -p\cdot c(x,y,\alpha,\mu)-q\cdot f(x,y,\alpha,\mu) -L(x,y,\alpha,\mu)\right\}
\ee 
and, for $x,y\in \mathbb{T}^d$, $p,q\in \R^{d}, \mu\in\mathcal{P}(\T^d)$
\begin{align*}
\hspace{1cm}H^\e(x,y,p,q,\mu):=H\left(x,y,p,\frac{q}{\e},\mu\right),\;
\sigma^{\e}:=\begin{pmatrix}
\sigma_1 & 0 \\
 0 & \frac{1}{\sqrt{\e}}\sigma_2
\end{pmatrix}
\text{ and  }\sigma_0^{\e}:=\begin{pmatrix}
\sigma_{0,1} & 0 \\
 \frac{1}{\sqrt{\e}}\sigma_{0,2} & 0
\end{pmatrix},\nonumber
\end{align*}
with $0$ 
being the $d\times d$ zero matrix.
\vspace{2mm}

\noindent
In general, Hamilton-Jacobi equations in infinite dimensions, such as \eqref{HJ1}, do not have classical solutions even if the data is smooth (see \cite{briani2018stable} and \cite[Example 2.4]{daudin2024comparison} for examples). As a consequence, it is essential to work with weaker notions of solutions such as monotone solutions or various notions of viscosity solutions, depending on the setup and the assumptions. For results in these directions, we refer to \cite{bertucci2021monotone, bertucci2024lipschitz, cardaliaguet2021weak} in the context of mean field games and \cite{cosso2024master, soner2024viscosity, bayraktar2023comparison,cheung2025viscosity,bayraktar2025viscosity} for mean field control, among others. 
\vspace{2mm}

\noindent
In this paper, we consider viscosity solutions of \eqref{HJ1} in the sense of test functions (see Definition \ref{vdef}). In particular, it was proved in \cite{daudin2023well} that there is a unique viscosity solution $U^{\e}$ of \eqref{HJ1} (we refer to section \ref{viscosity}). By adopting this definition, our goal is to study the behavior of $U^{\e}$ as $\e\rightarrow 0$. 

\subsection{Main results} Our main result is a homogenization result for \eqref{HJ1} and can be stated as follows

\begin{theorem}\label{main}
Assume \textbf{(A)} from section \ref{assu} is in place. Then, there exists a function $\overline{H}:\mathbb{T}^d\times \R^d\times\mathcal{P}(\T^d)\rightarrow \R$ such that for $(x,p,\mu)\in \T^d\times\R^d\times\mathcal{P}(\T^d)$, $\overline{H}(x,p,\mu)$ is the unique constant such that 
$$-\text{tr}\left((\sigma_2\sigma_2^{\top}+\sigma_{0,2}\sigma_{0,2}^\top )D^2_{yy}v\right)+H(x,y,p,D_yv,\mu)=\overline{H}(x,p,\mu),\;\;y\in \T^d,$$
has a unique, up to a constant, classical solution $v(y)=v(y;x,p,\mu)$ and the equation on $[0,T]\times \mathcal{P}(\mathbb{T}^d)$
\be \label{HJ2}
\begin{cases}
    -\partial_t U +\int_{\mathbb{T}^{d}}\overline{H}(x,D_{\mu}U(t,\mu,x),\mu)\mu(dx)\\
    \hspace{2cm}-\int_{\mathbb{T}^{d}}\text{tr}\left((\sigma_1\sigma_1^{\top}+\sigma_{0,1}\sigma_{0,1}^{\top})D_{x}D_{\mu}U(t,\mu,x)\right)\mu(dx)\\
    \hspace{3cm}-\int_{\T^{d}}\int_{\T^d}\text{tr}\left(\sigma_{0,1}\sigma_{0,1}^\top D^2_{\mu\mu}U(t,\mu,x,y)\right)\mu(dx)\mu(dy)=0,\\
    U(T,\mu)=\mathcal{G}(\mu)
\end{cases}\ee
has a unique viscosity solution $U: [0,T]\times \mathcal{P}(\mathbb{T}^d)\rightarrow \R$. Furthermore, $U^{\e}(t_0,m_0)$ converges as $\e\rightarrow 0$ to $U(t_0,\pi^1_*m_0)$ uniformly in $[0,T]\times\mathcal{P}(\mathbb{T}^{2d})$.
\end{theorem}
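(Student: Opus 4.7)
The plan is to adapt Evans' perturbed test function method to the Wasserstein framework of Definition \ref{vdef}, paralleling the classical periodic homogenization of second-order Hamilton--Jacobi equations but with the measure $m\in\mathcal{P}(\T^{2d})$ playing the role of the slow variable and its fibre over the $x$-marginal playing the role of the fast variable. Concretely, the proof splits into three steps: (i) solve the cell problem to define $\overline{H}$ and obtain a smooth corrector $v(y;x,p,\mu)$; (ii) construct, for each smooth test function $\phi$ on $[0,T]\times\mathcal{P}(\T^d)$, a perturbed test function $\phi^\e$ on $[0,T]\times\mathcal{P}(\T^{2d})$; (iii) pass to the limit via Barles--Perthame half-relaxed limits and close with a comparison principle for \eqref{HJ2}.

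First I would solve the cell problem. For fixed $(x,p,\mu)$ this is a uniformly elliptic PDE on the compact torus $\T^d$ whose nonlinear term has at most linear growth in $D_y v$ (by the compactness of $A$). A standard discount approximation
$$\lambda v_\lambda-\mathrm{tr}\big((\sigma_2\sigma_2^\top+\sigma_{0,2}\sigma_{0,2}^\top)D^2_{yy}v_\lambda\big)+H(x,y,p,D_y v_\lambda,\mu)=0\quad\text{on }\T^d$$
followed by $\lambda\to 0$ yields $\overline{H}(x,p,\mu):=-\lim_{\lambda\to 0}\lambda v_\lambda(y_0)$ for any reference point $y_0\in\T^d$, together with a classical solution $v$ unique up to an additive constant. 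Schauder-type a priori estimates then upgrade this to joint Lipschitz (or higher) regularity of $\overline{H}$ and of $v$ in the parameters $(x,p,\mu)$, which is needed downstream.

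Next, given a test function $\phi(t,\mu)$ touching the upper half-relaxed limit $\overline{U}$ from above at a point $(t_0,\mu_0)$, I would define on $\mathcal{P}(\T^{2d})$
$$\phi^\e(t,m):=\phi(t,\pi^1_*m)+\e\int_{\T^{2d}}v\big(y;x,D_\mu\phi(t,\pi^1_*m,x),\pi^1_*m\big)\,m(dx,dy).$$
A direct computation gives that the $x$-component of $D_m\phi^\e$ equals $D_\mu\phi(t,\pi^1_*m,x)+O(\e)$ while the $y$-component equals $\e\,D_y v(y;\cdot)+O(\e)$, so dividing by $\e$ inside $H^\e$ recovers $D_y v$. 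Plugging $\phi^\e$ into the operator in \eqref{HJ1} and using the cell problem fibrewise collapses the fast contribution into $\int_{\T^d}\overline{H}(x,D_\mu\phi(t,\pi^1_*m,x),\pi^1_*m)\,\pi^1_*m(dx)$, while the slow diffusion and common-noise terms pass to those in \eqref{HJ2}. Writing $\overline{U}$ and $\underline{U}$ for the upper and lower half-relaxed limits of $U^\e$ along sequences with $\pi^1_*m\to\mu_0$, the standard argument then shows $\overline{U}$ is a subsolution and $\underline{U}$ a supersolution of \eqref{HJ2}; the terminal data match since $\mathcal{G}(\pi^1_*m)\to\mathcal{G}(\mu)$. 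Invoking the comparison principle for \eqref{HJ2} in the Wasserstein viscosity class of \cite{daudin2023well} gives $\overline{U}\le\underline{U}$, hence equality, simultaneously producing the unique solution $U$ and the uniform convergence $U^\e(t_0,m_0)\to U(t_0,\pi^1_*m_0)$.

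The hard part will be verifying that $\phi^\e$ actually belongs to the admissible class of test functions of Definition \ref{vdef} and that the error terms produced by taking Wasserstein derivatives of $\phi^\e$ in $m$ really are $O(\e)$. This demands joint $C^{1,1}$-type regularity of $v(y;x,p,\mu)$ in all four arguments, not merely in $y$, together with uniform control on the cross-derivatives that appear through the chain rule when $p=D_\mu\phi(t,\pi^1_*m,x)$ itself depends on $m$ through $\pi^1_*m$. Establishing this regularity package for the corrector---and hence the Lipschitz (or better) dependence of $\overline{H}$ on $(x,p,\mu)$ that the comparison principle for \eqref{HJ2} requires---is the main technical bottleneck; once it is in place, the half-relaxed limits argument and the stability of viscosity solutions close the proof in a routine manner.
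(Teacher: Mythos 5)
Your overall roadmap matches the paper's: construct $\overline{H}$ via the vanishing-discount (ergodic) approximation, run Barles--Perthame half-relaxed limits with a perturbed test function, and close with the comparison principle of \cite{daudin2023well}. But there is one structural choice where you diverge, and it is exactly the choice that creates the ``main technical bottleneck'' you flag at the end.

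You define the corrector term as
$\phi^\e(t,m)=\phi(t,\pi^1_*m)+\e\int_{\T^{2d}}v\bigl(y;x,D_\mu\phi(t,\pi^1_*m,x),\pi^1_*m\bigr)\,m(dx,dy)$,
keeping $t$, $p=D_\mu\phi(t,\pi^1_*m,x)$ and $\mu=\pi^1_*m$ \emph{live}. Taking Wasserstein derivatives of $\phi^\e$ then forces you to differentiate $v$ in $p$ and in the measure argument $\mu$ through the chain rule, so you need Wasserstein (not merely classical) differentiability of the corrector in $\mu$ and joint control of the cross terms $D_pv\cdot D^2_{\mu x}\phi$, etc. That regularity package is genuinely hard and is never established in the paper, so as written your perturbed test function is not obviously in the admissible class and the $O(\e)$ bookkeeping does not close.

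The paper sidesteps this entirely by \emph{freezing} the slow arguments of the corrector at the touching point. If $(\overline{t},\overline{z},\overline{\mu})$ is the strict maximum of $\widetilde{\overline{U}}-\varphi$, the perturbed test function is
$\psi^{\e,\delta}(t,z_1,z_2,m)=\varphi(t,z_1,\pi^1_*m)+\e\int_{\T^{2d}}v^\delta(x,y+z_2)\,m(dx,dy)$
where $v^\delta$ solves the $\delta$-discounted cell problem with frozen data $p=D_\mu\varphi(\overline{t},\overline{z},\overline{\mu},x)$ and $\mu=\overline{\mu}$. Because $p$ and $\mu$ are fixed, $\psi^{\e,\delta}$ depends on $m$ only \emph{linearly} through $\int v^\delta\,dm$, so the Wasserstein derivatives of $\psi^{\e,\delta}$ involve only $x$- and $y$-derivatives of $v^\delta$, and Proposition~\ref{lemma1} (which gives $C^2$ regularity in $(x,p,y)$) is all that is needed; no Wasserstein differentiability of the corrector in $\mu$ is required. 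The error from evaluating at the touching point rather than at the running point $(t_0,z_{0,1},m_0)$ is controlled by shrinking the neighborhood $A_r$ and using the Lipschitz continuity of $\overline{H}$ proved in \eqref{regofeff}. You should also work with the $\delta$-approximate corrector $v^\delta$ directly (choosing $\delta$ small at the end so that $\lVert\delta v^\delta+\overline{H}\rVert_{L^\infty}\le\eta$) rather than the exact one: this gives uniform $C^{2,\beta}$ bounds and makes the final inequalities quantitative. Finally, note that because of the common noise terms the definition of viscosity solution in the paper goes through the lifted variables $\widetilde{U}(t,z,m)=U(t,(\text{Id}+z)_*m)$; the freezing and the $z$-translation have to be carried out at that level, which is why the paper's $v^\delta$ has the argument $y+z_2$.

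In short: your Step~(i) is the paper's Proposition~\ref{lemma1} and is fine; your Step~(iii) is the paper's conclusion and is fine; but in Step~(ii) you should freeze the test-function data at the touching point. With that modification the ``main technical bottleneck'' you identified dissolves, and the rest of the argument proceeds as you sketch.
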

\vspace{1mm}

\begin{remark}
In the literature the function $\overline{H}(x,p,\mu)$ is called the effective Hamiltonian associated with the Hamiltonian $H(x,y,p,q,\mu)$. Therefore, the function $\overline{\mathcal{H}}(\mu,p)=\int_{\T^d}\overline{H}(x,p,\mu)\mu(dx)$, $(\mu,p)\in\mathcal{P}(\T^d)\times \R^d$ serves as the effective Hamiltonian associated to the Hamiltonian of \eqref{HJ1}.
\end{remark}
\vspace{1mm}

\noindent
Theorem \ref{main} extends an already known result for Hamilton-Jacobi equations in finite dimensions. In particular, if $\sigma_0^{\e}$ is the zero $2d\times 2d$ matrix, the finite dimensional analogue of \eqref{HJ1} is a Hamilton-Jacobi equation posed in $\T^{2d}$ instead of $\mathcal{P}(\T^{2d})$ and takes the form 
\be\label{simpler}
\begin{cases}
-\partial_t u^{\e}+h\left(x,y,D_xv^{\e},\frac{1}{\e} D_yu^{\e}\right)-\text{tr}\left(\sigma_1\sigma_1^{\top}D^2_{xx}u^{\e}\right) -\frac{1}{\e}\text{tr}\left(\sigma_2\sigma_2^{\top}D^2_{yy}u^{\e}\right)=0,&\\
u^{\e}(T,x,y)=g(x),
\end{cases}\ee
for a given Hamiltonian $h:\T^{2d}\times\R^{2d}\rightarrow \R$ and some function $g:\T^d\rightarrow \R$. This problem was studied in \cite{bensoussan2005singular} and, for fully nonlinear equations, in \cite{alvarez2003singular} and the references therein. The main observation to study the behavior of $u^{\e}$ as $\e\rightarrow 0$ is that the family of viscosity solutions of \eqref{simpler} $(u^\e)_{\e>0}$ is uniformly bounded, consequently we may consider the semi-limits 
\begin{align*}
&\overline{u}(t,x):=\limsup_{\e\rightarrow 0, (t',x')\rightarrow (t,x)}\sup_{y\in \T^d} u^{\e}(t',x',y),\text{ if }t<T,\\
&\overline{u}(T,x):=\limsup_{(t',x')\rightarrow (T,x), t'<T}\overline{u}(t',x')
\end{align*}
and $\underline{u}$ (defined as $\overline{u}$ but with $\liminf,\; \inf$ instead $\limsup,\; \sup$, respectively). It is clear that 
$\overline{u}\geq \underline{u}$. On the other hand, the semi-limits $\overline{u},\underline{u}$ can be proved to be a subsolution and a supersolution of the equation
\be\label{effective}
\begin{cases}
   -\partial_tu +\overline{h}(x,D_xu)-\text{tr}\left(\sigma_1\sigma_1^{\top}D^2_{xx}u\right)=0,\\
   u(T,x)=g(x),
\end{cases}\ee
respectively, where $\overline{h}$ is another function called the effective Hamiltonian. The effective Hamiltonian has the property that for $(x,p)\in \T^d\times\R^d$, the equation 
$$-\text{tr}\left( \sigma_2\sigma_2^{\top}D^2_{yy}v\right)+h(x,y,p,D_yv)=\overline{h}(x,p),\;\;\;y\in \T^d$$
has a unique viscosity solution. By comparison principle, it is expected that $\overline{u}\leq \underline{u}$, hence $\overline{u}=\underline{u}:=u$, which will be a viscosity solution of \eqref{effective}. The proof of Theorem \eqref{main} relies on this idea, properly adapted in the viscosity solutions setting for Hamilton-Jacobi equations in infinite dimensions.
\vspace{2mm}

\noindent
If we restrict Theorem \ref{main} to the case where $L=0$ and the functions $f,c$ do not depend on $\alpha$, we may obtain a homogenization result for slow-fast systems of SDEs. We state it as a Corollary.

\begin{corollary}\label{linear case}
    Assume $\textbf{(A)}$ from section 2 in the particular case where $L=0,\; \mathcal{F}=0$ and $c,f$ are independent of $\alpha$. We also consider $T\ge t_0\ge 0$ and $(X_t^{t_0,\e,m_0},Y_t^{t_0.\e,m_0})$ to be a solution of \eqref{eq:slow-fastMcKeanVlasov} with initial condition on the law $\mathcal{L}(X_{t_0}^{t_0,\e,m_0},Y_{t_0}^{t_0.\e,m_0})=m_0$. Then, $\mathcal{G}\left(\mathcal{L}(X_T^{t_0,\e,m_0})\right)\xrightarrow{\e\rightarrow 0}\mathcal{G}\left(\mathcal{L}(X_T^{t_0,\pi^1_*m_0})\right)$, where $X_t^{t_0,\pi^1_*m_0}$ is the process satisfying the SDE
    \be\label{SDE cor}
        dX_t=\overline{c}(X_t,\mathcal{L}^0(X_t))dt+\sqrt{2}\sigma_1dB'_t+\sqrt{2}\sigma_{0,1}dB^0_t,\;\;\mathcal{L}(X_{t_0})=\pi^1_*m_0,
    \ee
    $B'$ is a Brownian motion on a possibly different probability space and $\overline{c}:\T^d\times \mathcal{P}(\T^d)\rightarrow \R^d$ is a function that depends on $c, f,\sigma_{2}$ and $\sigma_{0,2}$.
\end{corollary}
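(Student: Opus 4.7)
The plan is to specialize Theorem \ref{main} to the present linear, uncontrolled setting and identify every object in \eqref{HJ2} explicitly.

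First, since $c,f$ do not depend on $\alpha$ and $L=0$, the Hamiltonian \eqref{Hamiltonian} reduces to the affine form
$$H(x,y,p,q,\mu)=-p\cdot c(x,y,\mu)-q\cdot f(x,y,\mu),$$
and the infimum in \eqref{optimalc} is trivially attained, so that $U^{\e}(t_0,m_0)=\mathbb{E}\bigl[\mathcal{G}\bigl(\mathcal{L}^0(X_T^{t_0,\e,m_0})\bigr)\bigr]$. Second, I would compute the effective Hamiltonian. The cell problem in Theorem \ref{main} becomes the linear Poisson equation on $\mathbb{T}^d$
$$-\mathrm{tr}\!\bigl((\sigma_2\sigma_2^{\top}+\sigma_{0,2}\sigma_{0,2}^{\top})D^2_{yy}v\bigr)-f(x,y,\mu)\cdot D_yv-p\cdot c(x,y,\mu)=\overline{H}(x,p,\mu).$$
Under \textbf{(A)} the matrix $\sigma_2\sigma_2^{\top}+\sigma_{0,2}\sigma_{0,2}^{\top}$ is uniformly elliptic, so the adjoint operator admits a unique invariant probability measure $\pi^{x,\mu}\in\mathcal{P}(\mathbb{T}^d)$, and the Fredholm solvability condition forces
$$\overline{H}(x,p,\mu)=-p\cdot\overline{c}(x,\mu),\qquad \overline{c}(x,\mu):=\int_{\mathbb{T}^d}c(x,y,\mu)\,\pi^{x,\mu}(dy).$$
This is precisely the drift appearing in the averaged SDE \eqref{SDE cor}.

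Substituting this linear $\overline{H}$ into \eqref{HJ2} turns the effective equation into the linear Kolmogorov-type equation on $\mathcal{P}(\mathbb{T}^d)$ associated with \eqref{SDE cor}. The key identification step is to show that the candidate
$$\widetilde U(t_0,\mu_0):=\mathbb{E}\bigl[\mathcal{G}\bigl(\mathcal{L}^0(X_T^{t_0,\mu_0})\bigr)\bigr],\qquad X\text{ solution of \eqref{SDE cor}},$$
is the viscosity solution of \eqref{HJ2}. Under the regularity built into \textbf{(A)} I expect $\widetilde U$ to be a classical solution of this linear equation (this is essentially the content of the well-posedness results used in \cite{bezemek2023rate,zitridis2023homogenization} for the control-free case); any classical solution is automatically a viscosity solution in the sense of Definition \ref{vdef}, so by the uniqueness part of Theorem \ref{main} we get $U(t_0,\pi^1_*m_0)=\widetilde U(t_0,\pi^1_*m_0)$.

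Combining the last two steps, the convergence $U^{\e}(t_0,m_0)\to U(t_0,\pi^1_*m_0)$ provided by Theorem \ref{main} reads
$$\mathbb{E}\bigl[\mathcal{G}\bigl(\mathcal{L}^0(X_T^{t_0,\e,m_0})\bigr)\bigr]\xrightarrow[\e\to 0]{}\mathbb{E}\bigl[\mathcal{G}\bigl(\mathcal{L}^0(X_T^{t_0,\pi^1_*m_0})\bigr)\bigr],$$
which, in the absence of common noise assumed in the corollary, collapses to the asserted statement since conditional and unconditional laws coincide. I expect the main obstacle to be the identification of $\widetilde U$ with the viscosity solution $U$ of \eqref{HJ2}: one must verify that the representation formula from the linear theory matches Definition \ref{vdef} of solution and then use comparison. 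Once this is in hand, the rest of the argument is a direct specialization of Theorem \ref{main}.
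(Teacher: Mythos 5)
Your argument follows the same route as the paper: reduce the Hamiltonian to its affine form, compute $\overline{H}$ via the solvability (Fredholm) condition and identify $\overline{c}$ as a $\pi^{x,\mu}$-average, specialize \eqref{HJ2} to the resulting linear Kolmogorov-type equation, identify the candidate $\widetilde U$ with the unique solution $U$, and invoke Theorem~\ref{main}. (Incidentally, your sign $\overline{H}(x,p,\mu)=-p\cdot\overline c(x,\mu)$ and your use of $\sigma_2\sigma_2^{\top}+\sigma_{0,2}\sigma_{0,2}^{\top}$ in the Poisson equation are the correct ones; the paper's displayed formula has a compensating sign slip.)

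There is, however, one genuine gap. You propose to show that $\widetilde U(t_0,\mu_0)=\mathbb{E}\bigl[\mathcal{G}(\mathcal{L}^0(X_T^{t_0,\mu_0}))\bigr]$ is a \emph{classical} solution of the limiting linear equation, appealing to the well-posedness results of \cite{bezemek2023rate,zitridis2023homogenization}. But under Assumption~\textbf{(A)} the terminal data $\mathcal{G}$ is only Lipschitz, so $\widetilde U$ need not be a classical solution; indeed, the remark immediately following Corollary~\ref{linear case} emphasizes that working with merely Lipschitz $\mathcal{G}$ is precisely what distinguishes this corollary from those earlier results, which require $\mathcal{G}$ to be smooth. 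The paper instead shows that $\widetilde U$ is a \emph{viscosity} solution of the linear equation by a standard dynamic programming principle argument (citing \cite[Proposition~6.3]{daudin2023well}) and then concludes by uniqueness from Theorem~\ref{joe}. You need to replace the classical-regularity step with such a DPP-based verification; otherwise the identification $U=\widetilde U$ does not go through at the level of generality asserted.

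A further, smaller point: you write that the common noise is absent in the corollary. It is not. Assumption~\textbf{(A)} allows $\sigma_{0,1},\sigma_{0,2}\neq 0$, and the limiting SDE \eqref{SDE cor} explicitly retains the $\sqrt{2}\,\sigma_{0,1}\,dB^0_t$ term. So one cannot simply collapse conditional laws onto unconditional ones; the identification must be made within the conditional/common-noise framework, as the paper does via the DPP.
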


\begin{remark}
Since $\mathcal{G}$ is only required to be Lipschitz continuous, Corollary \ref{linear case} relaxes the conditions of previous homogenization results for slow-fast systems of McKean-Vlasov SDEs defined on the torus $\T^d$ in the case where $\sigma_{0,1},\sigma_{0,2}$ are not state dependent; e.g. \cite[Remark 2.3(vii), Theorem 2.1]{zitridis2023homogenization} and \cite[Theorem 3.1]{bezemek2023rate} (on the torus and if $\sigma_{0,1}=\sigma_{0,2}=0$).  
\end{remark}

\vspace{1mm}

\subsection{Extensions and Outlook}
Our proof of Theorem \ref{main} can be extended to the case where \eqref{eq:slow-fastMcKeanVlasov} takes the form
$$
\begin{cases}
dX^{\e}_t &=  c(X^{\e}_t,Y^{\e}_t,\alpha_t,\mathcal{L}^0(X_t^\e))dt + \sqrt{2}\sigma_1(X_t^{\e},\mathcal{L}^0(X_t^{\e})) dB^1_t+\sqrt{2}\sigma_{0,1}dB^0_t,\\
dY^{\e}_t & = \frac{1}{\e}f(X^{\e}_t,Y^{\e}_t,\alpha_t,\mathcal{L}^0(X_t^{\e}))dt + \frac{\sqrt{2}}{\sqrt{\e}}\sigma_2(X_t^{\e},Y_t^{\e},\mathcal{L}^0(X_t^{\e})) dB^2_t+\frac{\sqrt{2}}{\sqrt{\e}}\sigma_{0,2}dB^0_t,
\end{cases}
$$
as long as we include the following assumption (A6) in \textbf{(A)}. We omit this more general case to simplify the presentation.
\vspace{2mm}

(A6) $\sigma_1:\T^d\times\mathcal{P}(\T^d)\rightarrow \R^{d\times d}$ and $\sigma_2:\T^d\times \T^d\times\mathcal{P}(\T^d)\rightarrow \R^{d\times d}$ are Lipschitz continuous functions, such that for every $\mu\in\mathcal{P}(\T^d)$, $(x,y)\mapsto \sigma_2(x,y,\mu)$ is $C^2$.

\vspace{2mm}

\noindent
On the other hand, our proof does not work when $\sigma_{0,1},\sigma_{0,2}$ are state dependent. The main difficulty is the comparison principle for \eqref{HJ2}, which is necessary in order to establish that the upper and lower semi-limits are equal (see proof of Theorem \ref{main}). For a general second order Hamilton-Jacobi equation in the Wasserstein space a comparison principle for viscosity sub/super-solutions has been proved in \cite{bayraktar2025viscosity}, provided that the Hamiltonian has a specific structure. In particular, \eqref{HJ1} will have a unique viscosity solution and the perturbed test function argument would still work if we adopt the definition from \cite{bayraktar2025viscosity}, however $\overline{H}$ has to satisfy the condition:
\be\label{suffcondition}
\overline{H}(x,p)=\sup_{\alpha\in A}\left\{ -b(x,\alpha)\cdot p -L'(x,\alpha)\right\},
\ee
for some $b,L': \T^d\times A\rightarrow \R$ continuous and Lipschitz continuous in $x$. If \eqref{suffcondition} does not hold, it is not known whether there is comparison for \eqref{HJ2}.
\vspace{1mm}

\noindent
The following natural directions, where there is comparison principle for viscosity subsolutions and supersolutions, also remain unanswered:
\begin{itemize}
    \item Fully non-linear and convex in $p,q,X,Y$ Hamiltonians $H(x,y,p,q,X,Y): \T^d\times\T^d\times\R^d\times \R^d\times \R^{d\times d}\times \R^{d\times d}\rightarrow \R$ (see \cite{cosso2024master,cheung2025viscosity}). In this case a different analysis is needed at the level of Proposition \ref{lemma1}, as well as a representation for the effective Hamiltonian analogous to \eqref{suffcondition}.
    \item The function $\mathcal{G}$ in \eqref{HJ1} depends on the full measure $m$ instead of just the first marginal $\pi^1_*m$. The finite dimensional analogue of this would be for $g$ in \eqref{simpler} to depend on both $x,y$ and was studied in \cite{alvarez2003singular} (and references therein).
\end{itemize}
We leave these questions for future research.
\vspace{2mm}

\noindent
We finish the introduction with the organization of the paper

\noindent
\subsection{Organization of the paper.} Section 2 introduces the notation and assumptions used throughout the paper. In Section 3, we provide the definition of viscosity solutions and state some basic properties that are already established in the literature. Section 4 outlines the main ideas of the proof, which is based on the perturbed test function method adapted to the infinite-dimensional setting. In this section, we construct the effective Hamiltonian, establish some of its regularity properties, and prove the main results. Definitions and properties of the Wasserstein space and Wasserstein derivatives, as well as the proofs of certain technical lemmas, are deferred to the appendix.

\section{Notation and Assumptions}\label{assu}

\noindent
We now introduce the notations and assumptions used throughout the rest of the paper.
\vspace{2mm}

\noindent
\textbf{Notation.} Through the note $T>0$. We will be using the symbol $\bx:= (x,y)\in \T^d\times\T^d$. For a Polish space $X$ and $p\geq 1$, we will denote the space of probability measures over $X$ with finite $p-$moment by $\mathcal{P}_p(X)$ and by ${\bf d}_p$ its distance; we refer to Appendix \ref{appendix A} for the precise definitions. We use the notation $\mathcal{P}(X):=\mathcal{P}_1(X)$. Differentiation of functions defined in this space is explained in Appendix \ref{appendix A}. For a function $U:\mathcal{P}(X)\rightarrow \R$, we write $U\in C^{2,2}$ if $D_mU(m,x),\; D_xD_mU(m,x)$ and $D^2_{mm}U(m,x,y)$ exist and are continuous, where $D_mU, D^2_{mm}U$ are the first and second Wasserstein derivatives of $U$, respectively.\\
Let $m\in \mathcal{P}_p(\T^{2d})$ and $\pi^1:\T^{d}\times\T^d\rightarrow \T^d$ with $\pi^1(x,y)=x$. We denote by $\pi^1_*m\in \mathcal{P}_p(\T^d)$ the $x-$marginal of the measure $m$. If $f:\T^d\rightarrow \T^d$ and $\mu\in\mathcal{P}(\T^d)$, then $f_*\mu\in\mathcal{P}(\T^d)$ is the pushforward measure such that $f_*\mu(B)=\mu(f^{-1}(B))$ for any measurable $B\subset \T^d$.\\
We use the symbol $D_{(x,y)}u(x,y)= (D_xu,D_yu)$ for the gradient as a vector of the gradients. $u\in C^k$ means that the function is $k$ times continuously differentiable. For a function $u(t,x)$ we write $u\in C^{1,2}$ if the derivatives $D_tu,\; D_xu, D^2_{xx}u$ exist and are continuous. We also use the notation $u\in C^2_x$ (resp. $C^1_x$) if the derivatives $D_xu$ and $D^2_{xx}u$ (resp. $D_xu$) exist and are continuous with respect to the $x$ variable. We write $u\in C^{2,\beta}$ if $u$ is twice continuously differentiable and the second derivative is $\beta$-H\"older continuous. We use the usual H\"older norm. We denote by $f\ast g(x):=\int f(y)g(x-y)dy$ the convolution between two functions.
\vspace{3mm}

\noindent
\textbf{Assumptions.} We work under the following assumptions on the functions $H$ and $\mathcal{G}$. Below, $A$ is a compact subset of $\R^{2d}$.
\vspace{3mm}

(A1) $c,f: \mathbb{T}^d\times\mathbb{T}^d\times A\times \mathcal{P}(\T^d)\rightarrow \R^d$ are continuous and there exists a constant $K\geq 0$ such that 
\begin{align*}
|c(x,y,\alpha,\mu)-c(x',y',\alpha,\mu')|+|f(x,y,\alpha)-f(x',y',\alpha)|\leq K\left(|x-x'|+|y-y'|+{\bf d}_1(\mu,\mu')\right),
\end{align*}
for all $\alpha\in A$, $x,x',y,y'\in \mathbb{T}^d$ and $\mu,\mu'\in\mathcal{P}(\T^d)$, where $|x-x'|$ denotes the Euclidean norm of $x-x'$.\\
\vspace{1mm}

(A2) The Lagrangian $L: \mathbb{T}^d\times\mathbb{T}^d\times A\rightarrow \R$ is continuous and satisfies
$$|L(x,y,a,\mu)-L(x',y',a,\mu')|\leq C_L\left(|x-x'|+|y-y'|+{\bf d}_1(\mu,\mu')\right),$$
for some constant $C_L$, for all $\alpha \in A$, $x,x',y,y'\in \mathbb{T}^d$ and $\mu,\mu'\in\mathcal{P}(\T^d)$.\\
\vspace{1mm}

(A3) The Hamiltonian $H:\mathbb{T}^{2d}\times  \R^{2d}\times \mathcal{P}(\T^d)\rightarrow \R$ satisfies for some constants $C_{H}>0$ and $\beta\in (0,1)$:

\null\hspace{1cm}(1) $H$ satisfies \eqref{Hamiltonian} with $c,f,L$ as in assumptions (A1) and (A2).

\null\hspace{1cm}(2) For any $\bm x, \bm x'\in \T^{2d}$, $p,q, p', q'\in \R^{d}$ and $\mu,\mu'\in\mathcal{P}(\T^d)$ we have
\begin{equation}\label{A2'}
\begin{split}
|H(\bm x,p,q,\mu)-&H(\bm x',p',q',\mu')|\\
&\leq C_H(1+|p|+|q|+|p'|+|q'|)\left(|\bm x-\bm x'|+|p-p'|+|q-q'|+{\bf d}_1(\mu,\mu')\right),
\end{split}
\end{equation}

\null\hspace{1cm}(3) For any $\mu\in\mathcal{P}(\T^d)$ the map $(x,y,p,q)\mapsto H(x,y,p,q,\mu)$ is a $C^2$ function with its second derivative being locally $\beta$-H\"older continuous uniformly in $\mu$. Furthermore, for any $x, y\in \T^d$, $p,q\in \R^d$ and $\mu\in\mathcal{P}(\T^d)$
\begin{align}
|D_xH(x,y,p,q,\mu)|&\leq C_H(1+|p|),\label{A2x}\\
|D_pH(x,y,p,q,\mu)|&\leq C_H(1+|p|).\label{A2p}
\end{align}

(A4) $\mathcal{G}: \mathcal{P}(\mathbb{T}^d)\rightarrow \R$ is ${\bf d}_1$-Lipschitz continuous, i.e. there exists a constant $C_{G}$ such that
$$|\mathcal{G}(\mu)-\mathcal{G}(\mu')|\leq C_{G} {\bf d}_1(\mu,\mu'), \text{ for every }\mu,\mu'\in \mathcal{P}(\mathbb{T}^d).$$
\vspace{-3mm}

(A5) $\sigma_1, \sigma_2,\sigma_{0,1},\sigma_{0,2}$ are $d\times d$ matrices and $\sigma_1,\sigma_2$ are invertible.\\

\noindent
We will denote the set of assumption (A1)-(A5) as assumption \textbf{(A)}.
\begin{remark}
(i) Assumptions (A4), (A5) and (A3)(2) guarantee the existence of a viscosity solution to \eqref{HJ1} (see section \ref{viscosity}).\\
    (ii) There are many examples of Hamiltonians $H$ that satisfy (A3). Three of them are 
    \begin{itemize}
        \item $H\equiv 0$.
        \item $H(x,y,p,q,\mu)=p\cdot c(x,y,\mu)+q\cdot f(x,y,\mu)$ with $L\equiv 0$ and $f,d$ independent of $\alpha$.
        \item We write $\alpha=(\alpha_1,\alpha_2)\in \R^d\times\R^d$. Then we consider $L(x,y,\alpha,\mu)=\frac{|\alpha|^2}{2}$, $c(x,y,\alpha,\mu)=\alpha_1$, $f(x,y,\alpha,\mu)=\alpha_2$, $H$ satisfies \eqref{Hamiltonian} and $A$ is closed ball centered at the origin.
    \end{itemize}
\noindent
(iii) Due to the compactness of $\mathcal{P}(\T^d)$, assumption (A4) also implies that $\mathcal{G}$ is bounded.

   
\end{remark}

\section{On viscosity solutions} \label{viscosity}
\noindent
To provide more generality to our statements, for this section only we change the notation, but we keep the same symbols for the Hamiltonian, the running cost $\mathcal{F}$ and the final cost $\mathcal{G}$. Let $n\in \mathbb{N}^*$, $H:\mathbb{T}^n\times \R^n\times\mathcal{P}(\T^n)\rightarrow \R$, $\mathcal{G}:\mathcal{P}(\mathbb{T}^n)\rightarrow \R$, an $n\times n$ invertible matrix $\sigma$ and an $n\times n$ matrix $\sigma_0$. We consider the Hamilton-Jacobi equation posed in $[0,T]\times\mathcal{P}(\mathbb{T}^n)$
\be \label{HJ}
\begin{cases}
    -\partial_t U(t,m) +\int_{\mathbb{T}^n}H\left(x,D_mU(t,m,x),m\right)m(dx)\\
    \hspace{2cm}-\int_{\mathbb{T}^n}\text{tr}\left( (\sigma\sigma^{\top}+\sigma_0\sigma_0^\top)D_xD_mU(t,m,x)\right)m(dx)\\
    \hspace{3cm}-\int_{\mathbb{T}^n}\int_{\T^n}\text{tr}\left( \sigma_0\sigma_0^\top D^2_{mm}U(t,m,x,y)\right)\;m(dx)m(dy)= 0,\\
    U(T,m)=\mathcal{G}(m).
\end{cases}
\ee
In this section we introduce the notion of viscosity solutions of HJ equations that we will be using as well as their properties. Before writing the definition we note the observation from \cite{bayraktar2023comparison} that if $U$ is smooth and we define $\widetilde{U}:[0,T]\times \T^n\times\mathcal{P}(\T^n)\rightarrow \R$ via the formula
\be\label{changeofvar}
\widetilde{U}(t,z,m)=U(t,(\text{Id}+z)_*m),
\ee
then we have
\begin{equation}\label{finite}
\begin{split}
\text{tr}\left( \sigma_0\sigma_0^\top D^2_{zz}\widetilde{U}(t,z,m)\right)=\int_{\T^n}\text{tr}&\left(\sigma_0\sigma_0^\top D_xD_mU(t,m^z,x)\right) m^z(dx)\\
&+\int_{\T^n}\int_{\T^n}\text{tr}\left(\sigma_0\sigma_0^{\top} D^2_{mm}U(t,m^z,x,y)\right)m^z(dx)m^z(dy),
\end{split}
\end{equation}
where 
\be \label{newm}
m^z=(\text{Id}+z)_*m.
\ee
Using this observation and \eqref{HJ}, one can check that $\widetilde{U}$ satisfies
\be \label{HJz}
\begin{cases}
    -\partial_t \widetilde{U} +\int_{\mathbb{T}^n}\widetilde{H}\left(x,z,D_m\widetilde{U}(t,z,m,x),m\right)m(dx)\\
    \hspace{2cm}-\int_{\mathbb{T}^n}\text{tr}\left( \sigma\sigma^{\top}D_xD_m\widetilde{U}(t,z,m,x)\right)m(dx)\\
    \hspace{4cm}-\text{tr}\left( \sigma_0\sigma_0^\top D^2_{zz}\widetilde{U}(t,z,m)\right)= 0,\\
    \widetilde{U}(T,z,m)=\widetilde{\mathcal{G}}(z,m),
\end{cases}
\ee
for $(t,z,m)\in [0,T]\times \T^n\times \mathcal{P}(\T^n)$, where
\be\label{newH,F,G}
\widetilde{H}(x,z,p,m):=H(x+z,p,m^z),\;\;\widetilde{\mathcal{G}}(z,m):=\mathcal{G}(m^z),\;\; (x,z,p,m)\in \T^n\times\T^n\times \R^n\times\mathcal{P}(\T^n).
\ee
We define viscosity solutions for the original equation \eqref{HJ} in terms of viscosity solutions of the transformed equation \eqref{HJz}. The latter is much easier to deal with, because the terms involving second-order derivatives in measure have been exchanged for a finite dimensional operator \eqref{finite}. Viscosity solutions for \eqref{HJz} are defined in the sense of test functions, in the spirit of Crandall-Lions.

\begin{definition}\label{test functions}
We say that $\varphi: [0,T]\times \T^n\times  \mathcal{P}(\mathbb{T}^n)\rightarrow \R$ is a test function and we write $\varphi\in C^{1,2,2}$, if $\partial_t\varphi(t,z.m), D_m\varphi(t,z,m,x), D_xD_m\varphi(t,z,m,x), D^2_{mm}\varphi(t,z,m,x,y)$ exist and are continuous as well as $D_z\varphi(t,z,m),D^2_{zz}\varphi(t,z,m)$ exist and are continuous.


\end{definition}

\begin{remark}
We keep the above definition for test functions $\varphi:[0,T]\times \T^{n}\times\T^n\times \mathcal{P}(\mathbb{T}^n\times \mathbb{T}^n )\rightarrow \R$, where $\mathbb{T}^n$ is substituted with $\mathbb{T}^n\times \mathbb{T}^n$ and $\mathcal{P}(\mathbb{T}^n)$ with $\mathcal{P}(\mathbb{T}^n\times \mathbb{T}^n)$. We denote the set of such test functions by $C^{1,2,2}$ again.
\end{remark}

\noindent
Having this in mind, we introduce the notion of viscosity solutions. We will be using the same definition of viscosity solutions (with the obvious modifications) for \eqref{HJ1} with $2d$ in place of $n$.

\begin{definition}\label{defz}
An upper (resp. lower) semi-continuous function $\widetilde{U}:[0,T]\times\T^n\times  \mathcal{P}(\mathbb{T}^n)\rightarrow \R$ is a viscosity subsolution (resp. supersolution) to equation \eqref{HJz} if:
\begin{itemize}
    \item $\widetilde{U}(T,z,m)\leq (\text{resp. }\geq)\widetilde{\mathcal{G}}(z,m),\text{ for every }(z,m)\in\T^n\times \mathcal{P}(\mathbb{T}^n);$
    \item for every $(t,z,m)\in [0,T)$ and any test function $\varphi\in C^{1,2,2}$ such that $\widetilde{U}-\varphi$ has a maximum (resp. minimum) at $(t,z,m)$, the first equation in \eqref{HJz} is satisfied with the inequality $\leq$ (resp. $\geq$) instead of the equality and $\varphi$ in place of $\widetilde{U}$.
    \end{itemize}
Finally, $\widetilde{U}$ is a viscosity solution of \eqref{HJz} if it is both a viscosity subsolution and a viscosity supersolution.
\end{definition}

\begin{remark}\label{strict}
   (i) Using a negative Sobolev space argument as in \cite[Lemma 2.3]{daudin2023well}, we can infer that just like in finite dimensions, in order to check that a function $U$ is a viscosity sub/super-solution to \eqref{HJz}, it suffices to restrict our attention to strict local maxima/minima. We may also assume that the strict local maximum/minimum is $0$ by shifting the test function if necessary.\\
   \noindent
(ii) Due to the analysis done at the beginning of the section, if $\sigma_{0,1},\sigma_{0,2}$ are both the zero matrix, then there is no need to introduce \eqref{HJz} into the definition of viscosity solutions. Thus, Definition \ref{vdef} will be in the sense of test functions as in the classical definition of viscosity solutions of Crandall-Lions in finite dimensions.
\end{remark}

\begin{definition}\label{vdef}
An upper (resp. lower) semicontinuous function $U:[0,T]\times \mathcal{P}(\T^n)\rightarrow \R$ is called a viscosity subsolution (resp. supersolution) of \eqref{HJ} if $\widetilde{U}:[0,T]\times \T^n\times\mathcal{P}(\T^n)\rightarrow \R$ defined as in \eqref{changeofvar} is a viscosity subsolution (resp. supersolution) of \eqref{HJz}. $U$ is a viscosity solution of \eqref{HJ} if it is a viscosity subsolution and a viscosity supersolution.
\end{definition}

\noindent
We note that from \cite[Theorem 2.6, Theorem 2.7]{daudin2023well},
we have the following result for viscosity solutions of \eqref{HJ}. We assume that $H$ satisfies the following assumption (A0), which is comparable to (A3)(2) from section \ref{assu}.
\vspace{2mm}

\noindent
\textbf{Assumption} (A0)  \hspace{0.2cm} There exists a constant $C_H'$ such that or every $(x,p,m), (x',p',m')\in \T^n\times \R^n\times \mathcal{P}(\T^n)$
\be\label{A2''}
\left| H(x,p,m)-H(x',p',m')\right|\leq C_H'(1+|p|+|p'|)\left( |x-x'|+|p-p'|+{\bf d}_1(m,m')\right).
\ee

\begin{theorem} \label{joe}
Suppose that Assumption (A0) holds and that $\mathcal{G}:\mathcal{P}(\T^n) \rightarrow \R$ is Lipschitz continuous. Then, there exists a viscosity solution $U$ of \eqref{HJ}.
Furthermore, if $U_1$ is a viscosity subsolution of \eqref{HJ} and $U_2$ a viscosity supersolution of \eqref{HJ}, then $U_1\leq U_2$ in $[0,T]\times \mathcal{P} (\T^n)$. In particular, there is at most one viscosity solution of \eqref{HJ}.
\end{theorem}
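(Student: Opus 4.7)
The plan is to exploit the equivalence between \eqref{HJ} and the transformed equation \eqref{HJz}: since the latter contains only a finite-dimensional second-order operator (in $z$) and first-order measure derivatives, it is the natural arena for viscosity analysis. I would prove existence by representing the solution as the value function of an associated lifted stochastic control problem, and uniqueness by a doubling-of-variables comparison argument at the level of \eqref{HJz}, then transferring both conclusions back to \eqref{HJ} via Definition \ref{vdef}.

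For existence, define $\widetilde{U}(t,z,m)$ as the value of the stochastic control problem whose Hamiltonian is $\widetilde{H}$ and whose terminal cost is $\widetilde{\mathcal{G}}$, with the state $(Z,M)$ consisting of a finite-dimensional $\sigma_0$-diffusion in $z$ coupled with a measure component transported by the controlled drift (the Lagrangian being the one that yields $H$ by duality as in \eqref{Hamiltonian}). Under (A0) and the Lipschitz continuity of $\mathcal{G}$, standard Gronwall estimates together with Kantorovich duality give that $\widetilde{U}$ is bounded, Lipschitz in $(z,m)$ with respect to $|\cdot|+\mathbf{d}_1$, and $1/2$-H\"older in $t$. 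A measurable-selection dynamic programming principle combined with It\^o's formula on $C^{1,2,2}$ test functions then yields that $\widetilde{U}$ is a viscosity solution of \eqref{HJz} in the sense of Definition \ref{defz}; pulling back via \eqref{changeofvar} produces a viscosity solution of \eqref{HJ}.

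For uniqueness, suppose by contradiction that $\sup(\widetilde{U}_1-\widetilde{U}_2) > 0$ after the usual $\eta/(T-t)$ penalization that forces the maximum into $[0,T)\times \T^n\times\mathcal{P}(\T^n)$. I double variables via
\begin{equation*}
\Phi(t,z,m,s,z',m') := \widetilde{U}_1(t,z,m)-\widetilde{U}_2(s,z',m')-\frac{|z-z'|^2+|t-s|^2}{2\e}-\frac{\rho(m,m')^2}{\e},
\end{equation*}
where $\rho$ is a smoothed proxy for $\mathbf{d}_1$ on $\mathcal{P}(\T^n)$ (for instance a negative-order Sobolev norm obtained by convolving against a smooth kernel) whose square admits Wasserstein derivatives in both slots in the sense of Definition \ref{test functions} and satisfies $\mathbf{d}_1 \lesssim \rho$. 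Compactness of $\mathcal{P}(\T^n)$ in $\mathbf{d}_1$ supplies a maximizer, standard penalization estimates force each penalty term to $0$, and the sub/supersolution inequalities can be added after splitting $\Phi$ as a sum of two $C^{1,2,2}$ test functions in the separated variables. The $\partial_t$ and $D^2_{zz}$ contributions cancel pairwise, while the Hamiltonian difference is absorbed by (A0): the factor $1+|p_\e|+|p'_\e|$ coming from \eqref{A2''} is compensated by the vanishing of $|z_\e-z'_\e|$, $|p_\e-p'_\e|$, and $\mathbf{d}_1(m_\e,m'_\e)$ as $\e\to 0$, producing the desired contradiction.

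The delicate step, and the reason (A0) cannot be relaxed easily, is the construction of the measure penalty $\rho$: neither $\mathbf{d}_1$ nor $\mathbf{d}_2$ is admissible as a test function in the sense of Definition \ref{test functions}, so $\rho$ must be engineered so that $\rho(m,m')^2$ has Wasserstein derivatives of size comparable to $\rho$ itself while still dominating $\mathbf{d}_1$. A fallback, if the Sobolev-type construction misbehaves, is to pass to Lions' lifting on a reference Hilbert space of random variables, perform the Hilbertian doubling there with the standard $L^2$-penalty, and project back by pushforward at the end. With this comparison in hand, existence from the first step is preserved under the usual upper/lower semicontinuous envelope argument, and the two halves of Theorem \ref{joe} follow.
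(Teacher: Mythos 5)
The paper does not actually prove Theorem~\ref{joe}; it is imported verbatim from \cite[Theorems 2.6, 2.7]{daudin2023well}, and the only vestige of a proof in the source (commented out) is an appendix argument valid only under the extra hypotheses \textbf{(A')}, which in particular force $\sigma_0=0$. Your existence argument --- realizing $\widetilde U$ as the value of a lifted control problem and checking the viscosity property via dynamic programming and It\^o's formula on $C^{1,2,2}$ test functions --- is in line with what that commented-out appendix and the cited reference actually do, and is fine.

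The comparison half is where the proposal has a genuine gap. You correctly identify the delicate point (neither $\mathbf{d}_1$ nor $\mathbf{d}_2$ squared is an admissible test function for Definition~\ref{test functions}), but then you treat the construction of the smoothed penalty $\rho$ as an engineering detail, when in fact it is \emph{the} open obstruction to a doubling-of-variables proof in this setting, and to my knowledge no such $\rho$ with all the required properties (two-sided comparability with $\mathbf{d}_1$, squared version in $C^{1,2,2}$ in both slots, derivative bounds of the right size to absorb the growth in (A0)) is available. Note that (A0) only gives $|H(x,p,m)-H(x,p',m)|\le C(1+|p|+|p'|)|p-p'|$, i.e.\ quadratic growth in $p$, so the Hamiltonian difference is \emph{not} automatically absorbed by the vanishing penalties unless one also controls $|p_\e|,|p'_\e|$ a priori; your sketch does not supply this. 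The fallback via Lions' lifting also does not resolve this cleanly: after lifting, the Hamiltonian depends on the \emph{law} of the Hilbert-space variable, which is outside the scope of the classical Crandall--Lions Hilbert-space comparison theory you would want to invoke. The route actually taken in \cite{daudin2023well} (and reflected in the commented-out Proposition~\ref{comparison proof}) avoids doubling in the Wasserstein variable entirely: comparison is proved by sandwiching an arbitrary sub/supersolution against the value function $U$ using, on one side, smooth approximate supersolutions $\widehat{\mathcal V}^N$ built from $N$-particle value functions that converge to $U$ with quantitative rates (Fournier--Guillin), and on the other side, smooth supersolutions obtained from $\e$-optimal smooth feedback controls. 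In short: existence matches the paper's route, but your uniqueness strategy is not the paper's and, as written, it relies on an unproved construction that is precisely the hard point.
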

\vspace{2mm}

\noindent
A Corollary of Theorem \ref{joe} is the following.

\begin{corollary}\label{unbound}
    Assume (A0) and $\mathcal{G}$ is Lipschitz and let $U$ be the unique viscosity solution of \eqref{HJ}. Then, there exists a constant $C>0$ depending only on $\max_\mu|\mathcal{G}(\mu)|$ and $||H(\cdot,0,\cdot)||_{L^{\infty}}$ such that $|U(t,m)|\leq C$ for every $t\in [0,T]$ and $m\in \mathcal{P}(\mathbb{T}^n)$.
\end{corollary}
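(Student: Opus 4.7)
The plan is to sandwich $U$ between two explicit constant-in-measure barriers and invoke the comparison principle of Theorem \ref{joe}. Set $M := \max_{m \in \mathcal{P}(\T^n)} |\mathcal{G}(m)|$, which is finite by Lipschitz continuity of $\mathcal{G}$ together with the ${\bf d}_1$-compactness of $\mathcal{P}(\T^n)$ (the torus being compact), and set $K := \|H(\cdot,0,\cdot)\|_{L^\infty(\T^n \times \mathcal{P}(\T^n))}$, finite by the local bound in assumption (A0) together with the same compactness. The candidate barriers are
\[
\overline{U}(t,m) := M + K(T-t), \qquad \underline{U}(t,m) := -M - K(T-t).
\]

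The first step is to verify that $\overline{U}$ is a viscosity supersolution and $\underline{U}$ a viscosity subsolution of \eqref{HJ} in the sense of Definition \ref{vdef}. Since each barrier is independent of $m$, its transform under \eqref{changeofvar} is independent of $(z,m)$ and depends only on $t$, so all the measure- and $z$-derivatives appearing in \eqref{HJz} vanish identically. The left-hand side of \eqref{HJz} evaluated at $\widetilde{\overline{U}}$ thus reduces to
\[
K + \int_{\T^n} \widetilde{H}(x,z,0,m)\,m(dx) \;=\; K + \int_{\T^n} H(x+z, 0, m^z)\,m(dx) \;\geq\; K - K \;=\; 0,
\]
using $|H(\cdot,0,\cdot)| \leq K$ pointwise. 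Hence $\widetilde{\overline{U}}$ is a classical—and therefore viscosity—supersolution of \eqref{HJz}, while the terminal condition $\overline{U}(T,m) = M \geq \mathcal{G}(m)$ is immediate from the definition of $M$. The argument for $\underline{U}$ is entirely symmetric.

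The second step is to apply Theorem \ref{joe}: since $U$ is a viscosity solution of \eqref{HJ}, it is in particular both a viscosity subsolution and a viscosity supersolution, so the comparison principle yields $\underline{U} \leq U \leq \overline{U}$ on $[0,T] \times \mathcal{P}(\T^n)$, and consequently $|U(t,m)| \leq M + KT$. This gives the claim with $C := M + KT$, a constant depending only on the two quantities named in the statement.

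No genuine obstacle arises; the proof is essentially routine. The only mild point to be attentive to is that Definition \ref{vdef} routes the notion of viscosity sub/supersolution of \eqref{HJ} through the auxiliary equation \eqref{HJz}, so strictly speaking the barriers must be checked there rather than in \eqref{HJ} directly. For measure-independent barriers this rerouting is trivial, and the classical pointwise verification above suffices.
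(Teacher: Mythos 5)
Your proof is correct and rests on the same idea as the paper's: linear-in-time, measure-independent barriers $\pm(M + K(T-t))$ calibrated by $\|\mathcal{G}\|_\infty$ and $\|H(\cdot,0,\cdot)\|_{L^\infty}$. The only cosmetic difference is that you verify the barriers are super/subsolutions and then invoke the comparison principle of Theorem~\ref{joe}, whereas the paper argues directly by locating the maximum of $\widetilde{U}(t,z,m) + C_1 t + C_2$ and testing against $-C_1 t - C_2$, which amounts to re-proving comparison against exactly that barrier.
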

\begin{proof}
We will prove that there exists a constant $C>0$ depending only on $\mathcal{G}$ and $||H(\cdot,0)||_{L^{\infty}}$  such that $-C\leq \widetilde{U}(t,z,m)\leq C$ for all $(t,z,m)\in [0,T]\times \T^n\times \mathcal{P}(\T^n)$, where $\widetilde{U}$ is as in \eqref{changeofvar}. We start with the upper bound.\\
Let $C_1,C_2$ be two numbers that will be chosen later and assume that $\widetilde{U}(t,z,m)+C_1t+C_2$ attains its maximum at $(t_0,z_0,m_0)\in [0,T]\times\T^n\times \mathcal{P}(\mathbb{T}^n)$. The choice of $C_2$ is made such that the maximum of $\widetilde{U}(t,z,m)+C_1t+C_2$ is $0$. There are two cases: either $t_0\in [0,T)$ or $t_0=T$.\\
In the first case, by the definition of viscosity solutions for \eqref{HJz} with test function $-C_1t-C_2$ we get
    $$C_1+\int_{\mathbb{T}^n} \widetilde{H}(x,z_0,0,m_0)m_0(dx)\leq 0,$$
    thus $C_1\leq ||H(\cdot,0,\cdot)||_{L^{\infty}}$. This is a contradiction if $C_1$ is chosen to be large enough, therefore we may assume that $t_0=T$. We have
    $$\widetilde{U}(t,z,m)+C_1t+C_2\leq \widetilde{\mathcal{G}}(z_0,m_0)+C_1T+C_2,\;\;\text{for all } (t,z,m)\in [0,T]\times\T^n\times \mathcal{P}(\mathbb{T}^n).$$
    So, $\widetilde{U}(t,z,m)\leq \max_{m\in\mathcal{P}(\mathbb{T}^n)}\mathcal{G}(m)+|C_1|T$. The upper bound follows. To prove the lower bound we use a similar argument for the minimum of $U(t,m)-C_1t-C_2$. 
\end{proof}

\section{Proof of the main result}

\noindent
We start by presenting a formal calculation that motivates our argument. 

\subsection{Formal calculation}
Let's assume that $U^{\e}(t,m)=U(t,\mu)+\e V(t,m)$, where $\mu$ is the $x-$marginal of $m$ ($\mu=\pi^1_*m$) and $U, V$ are two regular functions (to be chosen later). The idea is that $U^{\e}$ will be an $\e$-perturbation of its limit $U$. Substituting into \eqref{HJ1} and using Proposition \ref{Wass reg} yields
\begin{align*}
-\partial_t U(t,\mu)&-\e\partial_t V(t,m)\\
&+\int_{\mathbb{T}^{2d}}H\left(x,y,D_{\mu}U(t,\mu,x)+\e D_x\frac{\delta V}{\delta m}(t,m,x,y), D_y \frac{\delta V}{\delta m}(t,m,x,y),\mu\right)m(dx,dy)\\
&-\int_{\mathbb{T}^d}\text{tr}\left(\sigma_1\sigma_1^{\top}D_xD_{\mu}U(t,\mu,x)\right)\mu(dx)-\e \int_{\mathbb{T}^{2d}}\text{tr}\left(\sigma_2\sigma_2^{\top}D^2_{xx}\frac{\delta V}{\delta m}(t,m,x,y)\right)m(dx,dx)\\
&-\int_{\mathbb{T}^{2d}}\text{tr}\left(\sigma_2\sigma_2^{\top}D^2_{yy}\frac{\delta V}{\delta m}(t,m,x,y)\right)m(dx,dx)-\int_{\T^{2d}}\text{tr}\left( \sigma_{0,1}\sigma_{0,1}^\top D_xD_{\mu}U(t,\mu,x)\right)\mu(dx)\\
&-\e \int_{\T^{2d}}\text{tr}\left(\sigma_0^{\e}\sigma_0^{\e\top} D_{(x,y)}D_mV(t,m,x,y)\right)m(dx,dy)\\
&-\int_{\T^{2d}}\int_{\T^{2d}}\text{tr}\left( \sigma_{0,1}\sigma_{0,1}^{\top}D^2_{\mu\mu}U(t,\mu,x,x') \right)\mu(dx)\mu(dx')\\
&-\e\int_{\T^{2d}}\int_{\T^{2d}}\text{tr}\left(  \sigma_0^{\e}\sigma_0^{\e\top}D^2_{mm}V(t,m,x,y,x',y')\right)m(dx,dy)m(dx',dy')=0.
\end{align*}
By the regularity of $H$, the third term is equal to 
$$\int_{\mathbb{T}^{2d}}H\left(x,y,D_{\mu}U(t,\mu,x), D_y \frac{\delta V}{\delta m}(t,m,x,y),\mu\right)m(dx,dy)+\mathcal{O}(\e),$$
therefore collecting the $\mathcal{O}(\e)$ terms, the equation becomes
\begin{align}
-\partial_t U(t,\mu)&+\int_{\mathbb{T}^{2d}}H\left(x,y,D_{\mu}U(t,\mu,x), D_y \frac{\delta V}{\delta m}(t,m,x,y),\mu\right)m(dx,dy)\nonumber\\
&-\int_{\mathbb{T}^d}\text{tr}\left(\sigma_1\sigma_1^{\top}D_xD_{\mu}U(t,\mu,x)\right)\mu(dx) -\int_{\mathbb{T}^{2d}}\text{tr}\left(\sigma_2\sigma_2^{\top}D^2_{yy}\frac{\delta V}{\delta m}(t,m,x,y)\right)m(dx,dx)\nonumber\\
&-\int_{\T^{2d}}\int_{\T^{2d}}\text{tr}\left( \sigma_{0,1}\sigma_{0,1}^{\top}D^2_{\mu\mu}U(t,\mu,x,x,') \right)\mu(dx)\mu(dx')\nonumber\\
&-\int_{\T^{2d}}\text{tr}\left( \sigma_{0,1}\sigma_{0,1}^\top D_xD_{\mu}U(t,\mu,x)\right)\mu(dx)\nonumber\\
&-\int_{\T^{2d}}\text{tr}\left(\sigma_{0,2}\sigma_{0,2}^{\top}D^2_{yy}\frac{\delta V}{\delta m}(t,m,x,y)\right)m(dx,dy)\nonumber\\
& -\int_{\T^{2d}}\int_{\T^{2d}}\text{tr}\left(\sigma_{0,2}\sigma_{0,2}^{\top}D^2_{yy'}\frac{\delta^2 V}{\delta m^2}(t,m,x,y,x',y') \right)m(dx,dy)m(dx',dy')=\mathcal{O}(\e).\label{formal}
\end{align}
As $U$ is expected to be the limit of $U^{\e}$ as $\e\rightarrow 0$, to find the equation satisfied by it we need to eliminate $V$ from \eqref{formal}. To do that, we would like to have
\begin{align*}
\int_{\T^{2d}}&H\left(x,y,D_{\mu}U(t,\mu,x), D_y \frac{\delta V}{\delta m}(t,m,x,y),\mu\right)m(dx,dy)\\
&-\int_{\T^{2d}}-\text{tr}\left((\sigma_2\sigma_2^{\top}+\sigma_{0,2}\sigma_{0,2}^{\top})D^2_{yy}\frac{\delta V}{\delta m}(t,m,x,y)\right) m(dx,dy)\\
&\;\;\;\;\;\;-\int_{\T^{2d}}\int_{\T^{2d}}\text{tr}\left(\sigma_{0,2}\sigma_{0,2}^{\top}D^2_{yy'}\frac{\delta^2 V}{\delta m^2}(t,m,x,y,x',y')\right)m(dx,dy)m(dx',dy')=B(\mu),
\end{align*}
for some function $B:\mathcal{P}(\T^d)\rightarrow \R$ depending on $U$. This leads us to considering the problem
\be \label{cell}
\begin{split}
H(x,y,D_{\mu}U(t,\mu,x)&, D_y \phi(t,\mu,x,y),\mu)\\
&-\text{tr}\left( (\sigma_2\sigma_2^{\top}+\sigma_{0,2}\sigma_{0,2}^{\top})D^2_{yy}\phi(t,\mu,x,y)\right)=\overline{H}(x,D_{\mu}U(t,\mu,x),\mu),
\end{split}
\ee
where on the right hand side we have the unique constant such that this problem has a unique up to a constant solution $\phi$. Since we want $D_y\phi = D_y\frac{\delta V}{\delta m}$, we let 
$$V(t,m)=\int_{\mathbb{T}^{2d}}\phi(t,\mu,x,y)m(dx,dy).$$
Then, again by Proposition \ref{Wass reg},
$\frac{\delta V}{\delta m}(t,m,x,y)= \phi(t,\mu,x,y)+\int_{\mathbb{T}^{2d}}\frac{\delta \phi}{\delta \mu}(t,\mu,x',y',x)m(dx',dy')$,
so the Wasserstein partial derivative $D_y\frac{\delta V}{\delta m}(t,m,x,y)=D_y\phi(t,\mu,x,y)$, which implies
\begin{align*}
\int_{\mathbb{T}^{2d}}\overline{H}(x,D_{\mu}U(t,\mu,x),\mu&)\mu(dx)= \int_{\mathbb{T}^{2d}}H\left(x,y,D_{\mu}U(t,\mu,x), D_y \frac{\delta V}{\delta m}(t,m,x,y),\mu\right)m(dx,dy)\\
&-\int_{\T^{2d}}\text{tr}\left((\sigma_2\sigma_2^{\top}+\sigma_{0,2}\sigma_{0,2}^{\top})D^2_{yy}\frac{\delta V}{\delta m}(t,m,x,y)\right)m(dx,dy)\\
&-\int_{\T^{2d}}\int_{\T^{2d}}\text{tr}\left(\sigma_{0,2}\sigma_{0,2}^{\top}D^2_{yy'}\frac{\delta^2 V}{\delta m^2}(t,m,x,y,x',y')\right)m(dx,dy)m(dx',dy'),
\end{align*}
due to \eqref{cell} and the fact that $\frac{\delta^2V}{\delta m^2}$ does not have a term depending on both $y$ and $y'$. Therefore, for that choice of $V$ \eqref{formal}, gives
\begin{align}
-\partial_t U(t,\mu)&+\int_{\mathbb{T}^{2d}}\overline{H}(x,D_{\mu}U(t,\mu,x),\mu)\mu(dx)-\int_{\mathbb{T}^d}\text{tr}\left((\sigma_1\sigma_1^{\top}+\sigma_{0,1}\sigma_{0,1}^{\top})D_xD_{\mu}U(t,\mu,x)\right)\mu(dx)\nonumber\\
& -\int_{\T^d}\int_{\T^d} \text{tr}\left( \sigma_{0,1}\sigma_{0,1}^\top D^2_{\mu\mu}U(t,\mu,x,x') \right) \mu(dx)\mu(dx')=\mathcal{O}(\e).\nonumber
\end{align}
We send $\e\rightarrow 0$ to derive that $U$ satisfies equation \eqref{HJ2}.

\subsection{Proof of Theorem \ref{main}}
As the formal calculation suggests, for $(x,p,\mu)\in \T^d\times\R^d\times\mathcal{P}(\T^d)$, to construct $\overline{H}(x,p)$ we need to study the following problem, which is posed in $\mathbb{T}^d$:
$$
H(x,y,p,D_yv,\mu)-\text{tr}\left((\sigma_2\sigma_2^{\top}+\sigma_{0,2}\sigma_{0,2}^{\top})D^2_{yy}v\right)=l.$$
Such problems have been studied in the context of homogenization theory, for example in \cite{evans1992periodic}, and it is known that there exists a unique constant $l$ such that \eqref{cell} is well posed (up to an additive constant). We provide an argument on the existence of such a constant and we show some properties that will be useful in the proof of the main Theorem \ref{main}. In particular, the following Proposition holds.

\begin{proposition}\label{lemma1}
    Assume that $H$ satisfies (A3). Then, for any $\delta>0$ and $x\in\T^d,\; p\in \R^d$, $\mu\in\mathcal{P}(\T^d)$, there exists a unique classical solution $v^{\delta}:=v^{\delta}(x,p,y,\mu)$ of 
    \be \label{delta}\delta v^{\delta}-\text{tr}\left((\sigma_2\sigma_2^{\top}+\sigma_{0,2}\sigma_{0,2}^{\top})D^2_{yy} v^{\delta}\right)+H(x,y,p,D_yv^{\delta},\mu)=0,\;\; y\in \mathbb{T}^d.\ee
    The function $v^{\delta}(x,p,y,\mu)\in C^2_{x,p,y}$, that is the derivatives $D_{x}v^{\delta}, D^2_{xx}v^{\delta}, D_pv^{\delta}, D_{pp}^2v^{\delta}, D^2_{xp}v^{\delta}$ exist and are continuous, and for any $(x,p,\mu)\in \T^d\times \R^d\times\mathcal{P}(\T^d)$ there exists a number $\overline{H}(x,p,\mu)$ such that 
    \be\label{effectiveapp}
\lim_{\delta\rightarrow 0}\left\| \delta v^{\delta}(x,p,\cdot,\mu)+\overline{H}(x,p,\mu)\right\|_{L^{\infty}}=0.
    \ee
Furthermore, the function $\overline{H}:\T^d\times \R^d\times\mathcal{P}(\T^d)\rightarrow \R$ satisfies for some constant $C>0$
\be\label{regofeff}
\left|\overline{H}(x,p,\mu)-\overline{H}(x',p',\mu')\right|\leq C(1+|p|+|p'|)\left(|x-x'|+|p-p'|+{\bf d}_1(\mu,\mu')\right),
\ee
$\text{for all } x,x'\in \T^d,\; p,p'\in \R^d\;\text{and } \mu,\mu'\in\mathcal{P}(\T^d).$

\end{proposition}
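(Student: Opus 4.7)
The plan is to establish the four assertions in order: (i) solvability and uniqueness of the discounted cell problem \eqref{delta} for each $\delta>0$, (ii) joint $C^2$ regularity of $v^\delta$ in $(x,p,y)$, (iii) existence of the constant $\overline{H}(x,p,\mu)$ as the uniform limit of $-\delta v^\delta$, and (iv) the estimate \eqref{regofeff}. Throughout, write $A:=\sigma_2\sigma_2^\top+\sigma_{0,2}\sigma_{0,2}^\top$, which is positive definite by (A5), and exploit that because $H$ arises from the sup \eqref{Hamiltonian} with bounded $c,f,L$, the map $q\mapsto H(x,y,p,q,\mu)$ is globally Lipschitz with constant $\|f\|_\infty$ and $|H(x,y,p,0,\mu)|\le C(1+|p|)$. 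For (i), I would construct $v^\delta$ by a Leray--Schauder fixed point applied to the map $v\mapsto u$, where $u\in C^{2,\beta}(\T^d)$ is the unique solution of the linear uniformly elliptic equation $\delta u-\mathrm{tr}(AD^2_{yy}u)=-H(x,y,p,D_yv,\mu)$; compactness comes from periodic Schauder estimates, a priori bounds from the maximum principle (giving $\|\delta v^\delta\|_{L^\infty}\le C(1+|p|)$), and uniqueness from a direct comparison argument using the global Lipschitz dependence on $q$. The joint $C^2$ regularity of (ii) is obtained by formally differentiating \eqref{delta}: $D_xv^\delta, D_pv^\delta$ and their second derivatives satisfy linear uniformly elliptic equations on $\T^d$ whose coefficients are continuous thanks to (A3)(3) and the already-established $C^{2,\beta}$ regularity in $y$, and Schauder theory justifies the differentiation.

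For (iii), the key estimate is the uniform oscillation bound
\[\mathrm{osc}_{\T^d}\bigl(v^\delta(x,p,\cdot,\mu)\bigr)\le C_0(1+|p|),\qquad \delta\in(0,1].\]
Granted this, fixing a minimiser $y_0=y_0(\delta)$ of $v^\delta$ yields $|\delta v^\delta(y)-\delta v^\delta(y_0)|\le C_0(1+|p|)\delta\to 0$ uniformly in $y$, while $\delta v^\delta(y_0)$ lies in a compact interval by the $L^\infty$ bound. Setting $w^\delta:=v^\delta-v^\delta(y_0)$, the PDE
\[-\mathrm{tr}(AD^2_{yy}w^\delta)+H(x,y,p,D_yw^\delta,\mu)=-\delta v^\delta\]
has uniformly bounded right-hand side and uniform $L^\infty$ bound on $w^\delta$, so interior Schauder estimates deliver uniform $C^{2,\beta}$ bounds. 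Extracting a subsequence $\delta_n\to 0$, $(w^{\delta_n},\delta_n v^{\delta_n}(y_0))\to(w,-\overline{H})$ in $C^2(\T^d)\times\R$, and $w$ solves the cell problem $-\mathrm{tr}(AD^2_{yy}w)+H(x,y,p,D_yw,\mu)=\overline{H}$. Uniqueness of the additive constant in this cell problem is standard: linearising the difference of two solutions with constants $\overline{H}_1>\overline{H}_2$ and integrating against the invariant measure of the adjoint of the associated linear operator on $\T^d$ yields a contradiction. Hence the whole family $\delta v^\delta$ converges uniformly to $-\overline{H}(x,p,\mu)$, proving \eqref{effectiveapp}.

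The main obstacle is the uniform oscillation bound; the essential point is that the $q$-Lipschitz constant of $H$ is $\|f\|_\infty$, \emph{independent of $\delta$}, so $w^\delta$ satisfies a linear uniformly elliptic equation $-\mathrm{tr}(AD^2_{yy}w^\delta)+b_\delta(y)\cdot D_yw^\delta+\delta w^\delta=F_\delta(y)$ with $\|b_\delta\|_\infty$ and $\|F_\delta\|_\infty$ bounded uniformly in $\delta\in(0,1]$; the periodic weak Harnack / Krylov--Safonov inequality applied to the nonnegative function $w^\delta$ then yields the bound. For (iv), I would first upgrade the $C^{2,\beta}$ bound on $w^\delta$ to a uniform gradient bound $\|D_yv^\delta\|_{L^\infty}\le C(1+|p|)$, then compare the PDEs satisfied by $v^\delta(x,p,\cdot,\mu)$ and $v^\delta(x',p',\cdot,\mu')$: subtracting, using (A3)(2) with the uniform gradient bound to control the discrepancy of the nonlinear terms by $C(1+|p|+|p'|)(|x-x'|+|p-p'|+{\bf d}_1(\mu,\mu'))$, and applying the maximum principle gives the same bound on $\|\delta(v^\delta-v^{\delta,\prime})\|_{L^\infty}$; passing $\delta\to 0$ via \eqref{effectiveapp} transfers the estimate to $\overline H$.
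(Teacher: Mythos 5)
Your proposal reaches the same conclusions as the paper but by a genuinely different route at two key points, and it is correct as a strategy.

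For the oscillation bound in (iii), you invoke the periodic Krylov--Safonov Harnack inequality for the nonnegative function $w^\delta=v^\delta-\min v^\delta$, having first used the globally $q$-Lipschitz structure $H(x,y,p,q,\mu)=\sup_\alpha\{-p\cdot c-q\cdot f-L\}$ to rewrite \eqref{delta} as a linear equation $-\mathrm{tr}(AD^2_{yy}w^\delta)+b_\delta\cdot D_yw^\delta+\delta w^\delta=F_\delta$ with $\|b_\delta\|_\infty\le\|f\|_\infty$, $\|F_\delta\|_\infty\le C(1+|p|)$, both uniform in $\delta$. This gives $\mathrm{osc}(v^\delta)\le C\|F_\delta\|_\infty$ directly. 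The paper instead proves the oscillation bound by a normalization-and-contradiction argument (following Arisawa): assume it fails along a sequence, rescale, pass to the limit via Arzel\`a--Ascoli and Schauder, and contradict the strong maximum principle. Both are correct; Harnack is shorter once one trusts the reduction to the linearized equation.

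For the Lipschitz estimate \eqref{regofeff}, you subtract the two equations for $v^\delta(x,p,\cdot,\mu)$ and $v^\delta(x',p',\cdot,\mu')$, split the Hamiltonian difference into a term that is linear in $D_y(v_1-v_2)$ (absorbed by the maximum principle) and a ``parameter'' term controlled by (A3)(2) together with the uniform gradient bound $\|D_yv^\delta\|_\infty\le C(1+|p|)$, then let $\delta\to 0$. The paper instead derives \eqref{regofeff} by first establishing the $L^\infty$ bounds \eqref{infinityestimate} and \eqref{pinfinityestimate} on the finite difference quotients $V_i^\delta$, $\tilde V_i^\delta$ via the maximum principle and (A3)(3), and then sums them; a sharper variant of your direct comparison in fact appears commented out in the source. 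Your version requires only the gradient bound from Step 1 rather than the difference-quotient machinery of Step 2, so it is somewhat more economical.

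The one place you substantially under-argue is the joint $C^2$ regularity in $(x,p)$. Saying that ``Schauder theory justifies the differentiation'' is not by itself a proof that $D_xv^\delta$, $D_pv^\delta$, $D^2_{xx}v^\delta$, $D^2_{xp}v^\delta$, $D^2_{pp}v^\delta$ exist: the equation \eqref{delta} is nonlinear, so one must either (a) run a genuine difference-quotient argument (as the paper does in Steps 2--3, with a separate Lemma \ref{parametric} for the second derivatives), or (b) invoke a Banach-space implicit function theorem for the map $(v,x,p)\mapsto\delta v-\mathrm{tr}(AD^2v)+H(x,\cdot,p,D_yv,\mu)$ from $C^{2,\beta}\times\T^d\times\R^d$ to $C^{0,\beta}$, whose linearization is invertible by Schauder. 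Either works, but the step is real work and should be spelled out. None of this changes the validity of your strategy; it is a matter of missing detail rather than a wrong idea.
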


\vspace{3mm}

\begin{proof}
Throughout the proof, we use the notation $a=\sigma_2\sigma_2^{\top}+\sigma_{0,2}\sigma_{0,2}^{\top}$, which is a strictly positive matrix due to assumption (A5). The existence and the uniqueness of a classical solution $v^{\delta}(x,p,\cdot,\mu)$, where $x\in\T^d,\; p\in \R^d$ and $\mu\in\mathcal{P}(\T^d)$ are fixed, is a standard result and can be found in \cite{gilbarg1977elliptic}. It suffices to show the other statements. We will divide the proof in 4 steps.
\vspace{1mm}

\noindent
\textit{Step 1.} (estimates on $v^{\delta}$ and existence of $\overline{H}(x,p,\mu)$)\\
Let $\lambda(p)=\| H(\cdot,\cdot, p,0,\cdot)\|_{L^{\infty}}$. We will show that there exists $\beta\in (0,1)$ and a constant $C>0$ independent of $x,p,\mu$ such that
\be\label{estimate}
\left\|v^{\delta}(x,p,\cdot,\mu)-v^{\delta}(x,p,0,\mu)\right\|_{C^{2,\beta}}\leq C(1+\lambda(p)+|p|).
\ee
Our argument follows that in \cite{arisawa1998ergodic}. Considering the optimality conditions for a maximum or a minimum of $v^{\delta}$ in \eqref{delta}, we deduce 
\be\label{optmax}
\left\|\delta v^{\delta}(x,p,\cdot,\mu)\right\|_{L^{\infty}}\leq \lambda(p).
\ee
We claim the following uniform (in $\delta$) bound
\be\label{unifbound}
\left\|v^{\delta}(x,p,\cdot,\mu)-v^{\delta}(x,p,0,\mu)\right\|_{L^{\infty}}\leq C(1+\lambda(p)+|p|).
\ee
If we assume that \eqref{unifbound} does not hold, then there exists a sequence $(\delta_k,x_k,p_k,\mu_k)_{k\in\mathbb{N}}$ with $\delta_k\rightarrow 0$ such that the solution $v_k(\cdot)=v^{\delta_k}(x_k,p_k,\cdot,\mu_k)$ satisfies
$$\left\| v_k-v_k(0)\right\|_{L^{\infty}}\geq k(1+\lambda(p_k)+|p_k|).$$
We set $b_k=\left\| v_k-v_k(0)\right\|_{L^{\infty}}^{-1}$ and $\tilde{v}_k=b_k(v_k-v_k(0))$. Then, $\tilde{v}_k(0)=0$, $\|\tilde{v}_k\|_{L^{\infty}}=1$ and $\tilde{v}_k$ satisfies
\begin{align*}
\delta_k \tilde{v}_k+b_k\delta_k v_k(0)-&\text{tr}\left(a D^2_{yy} \tilde{v}_k\right)\\
&+\sup_{\alpha\in A}\left\{ -b_kp_k\cdot c(x_k,y,\alpha,\mu_k)-D_y\tilde{v}_k\cdot f(x_k,y,\alpha,\mu_k)-b_kL(x_k,y,\alpha,\mu_k)\right\}=0. 
\end{align*}
Since $L,c$ are Lipschitz with respect to $y$, we have for $\beta\in (0,1)$ and due to \eqref{optmax}
$$|b_k\delta_kv_k(0)|+\left\| b_kp_k\cdot c(x_k,\cdot,\alpha,\mu_k)\right\|_{C^{0,\beta}}+\left\|b_k L(x_k,\cdot,\alpha,\mu_k)\right\|_{C^{0,\beta}}\leq Cb_k(1+|p_k|+\lambda(p_k))\leq \frac{C}{k}.$$
Now, by the regularity theory of quasilinear elliptic equations \cite{gilbarg1977elliptic, safonov1989classical}, we deduce that $\tilde{v}_k$ is uniformly bounded in $C^{2,\beta}$ for some $\beta\in (0,1)$. Fix $\alpha_0\in A$. The families of functions $\{\tilde{v}_k(\cdot)\}$, $\{ c(x_k,\cdot,\alpha_0,\mu_k)\}$, $\{ f(x_k,\cdot,\alpha_0,\mu_k)\}$ are uniformly Lipschitz and equibounded, hence the Arzela-Ascoli theorem yields the existence of a limit (along subsequences) to $\tilde{v}, \tilde{c}, \tilde{f}$, respectively, where $\tilde{v}\in C^{2,\beta}$. It is clear that $\tilde{v}(0)=0$, $\|\tilde{v}\|_{L^{\infty}}=1$ and $\tilde{v}$ satisfies
\be \label{subsolmax}
-\text{tr}\left(a D^2_{yy}\tilde{v}\right)-D_y\tilde{v}\cdot \tilde{f}(y)\leq 0.
\ee
Notice that $\tilde{v}$ is a periodic function, thus by the strong maximum principle in \eqref{subsolmax}, it must be constant. This contradicts the previous equalities.
\vspace{1mm}

\noindent
Now, we may use the estimates for quasilinear elliptic equations for the equation satisfied by $v^{\delta}$ and assumptions (A1), (A2) to get
\begin{align*}
\left\| v^{\delta}-v^{\delta}(0)\right\|_{C^{2,\beta}}&\leq C\left( \left\| v^{\delta}-v^{\delta}(0)\right\|_{L^{\infty}}+|\delta v^{\delta}(0)|+\sup_{\alpha\in A}\|L(x,\cdot,\alpha,\mu)\|_{C^{0,\beta}}+\sup_{\alpha\in A}\|p\cdot c(x,\cdot,\alpha,\mu)\|_{C^{0,\beta}}\right)\\
&\stackrel{\eqref{unifbound},\eqref{optmax}}{\leq} C(1+|p|+\lambda(p)),\end{align*}
which is \eqref{estimate}.
\vspace{1mm}

\noindent
With \eqref{estimate} and \eqref{optmax} in mind, we can use Arzela-Ascoli once again to the uniformly bounded and equicontinuous family $\{v^{\delta}(x,p,\cdot,\mu)-v^{\delta}(x,p,0,\mu)\}$ to extract a subsequence such that $v^{\delta}(x,p,y,\mu)-v^{\delta}(x,p,0,\mu)\rightarrow v(x,p,y,\mu)$ in $C^2$ as $\delta\rightarrow 0$ and $\delta v^{\delta}(x,p,0,\mu)\rightarrow -\overline{H}(x,p,\mu)$ (bounded sequence has convergent subsequence). Moreover, $v$ is a classical solution of 
\be\label{limitpde}
-\text{tr}\left( aD^2_{yy}v\right)+H(x,y,p,D_yv,\mu)+\overline{H}(x,p,\mu)=0.
\ee
However, it is straightforward to show that there is a unique $\overline{H}(x,p,\mu)$ such that \eqref{limitpde} has a unique solution up to a constant, hence $\overline{H}(x,p,\mu),v$ are unique subsequential limits of $\delta v^{\delta}(0),\; v^{\delta}-v^{\delta}(0)$, respectively. Finally, \eqref{effectiveapp} follows by the triangle inequality and the aforementioned convergences.

\vspace{2mm}
\noindent
\textit{Step 2.} (first derivatives of $v^{\delta}$)\\
Throughout the proof we still consider $(x,p,\mu)\in \T^d\times \R^d\times\mathcal{P}(\T^d)$ to be fixed. 
\vspace{1mm}

\noindent
We will start with the derivatives with respect to $x$. We will show that the partial derivatives $\partial_{x_i}v^{\delta}(x,p,y,\mu)$ exist for every $i=1,...,d$. Let $i\in \{1,...,d\}$ and $h$ be a vector which is parallel to $(0,... ,0,1,0,..,0)\in \R^d$, where the $1$ is in the $i$-th position. The difference $V_i^{\delta}(y):=v^{\delta}(x+h,p,y,\mu)-v^{\delta}(x,p,y,\mu)$ satisfies the equation
\be \label{x-diff}
\begin{split}
\delta V_i^{\delta} -\text{tr}\left(a D^2_{yy}V_i^{\delta}\right)+\int_0^1H_{q}&\left(sh+x,y,p,sD_yV_i^{\delta}+D_yv^{\delta}(x,p,y),\mu\right)ds\cdot D_yV_i^{\delta}\\
&=-|h|\int_0^1 H_{x_i}\left(sh+x,y,p,sD_yV_i^{\delta}+D_yv^{\delta}(x,p,y),\mu\right)ds.
\end{split}
\ee
Let $y_1,y_2\in \T^d$ be a position of maximum and a position of minimum of $V^{\delta}$, respectively. By combining the optimality conditions with \eqref{x-diff} we obtain
\begin{align*}
&\frac{|h|}{\delta}\int_0^1 H_{x_i}\left( sh+x,y_2,p,D_yv^{\delta}(x,p,y_2),\mu\right)ds\leq \min_y V_i^{\delta}\quad\text{ and }\\
&\max_y V_i^{\delta}\leq \frac{|h|}{\delta}\int_0^1 H_{x_i}\left( sh+x,y_1,p,D_yv^{\delta}(x,p,y_1),\mu\right)ds,
\end{align*}
hence, due to \eqref{estimate} and assumption (A3)(3) (equation \eqref{A2x}) on $H$,
\be \label{infinityestimate}
\| V_i^{\delta}(\cdot) \|_{L^{\infty}}\leq \frac{C_H}{\delta}(1+|p|)|h|=C'|h|,
\ee
for some constant $C'>0$. We also observe that, due to \eqref{estimate} and the assumptions on $H$, the two integrals in \eqref{x-diff} are Lipschitz with respect to the $y$ variable, therefore by the Schauder estimates for elliptic equations (see \cite[Chapter 6]{gilbarg1977elliptic}) and \eqref{infinityestimate}, there exists a constant $C>0$ such that $\|V_i^{\delta}\|_{C^{2,\beta}}\leq C|h|$. By Arzela-Ascoli, this implies that $V_i^{\delta}/|h|$ has a convergent subsequence (in $C^2$) as $|h|\rightarrow 0$ to a function $w_i^{\delta}(x,p,y,\mu)$. By sending $|h|\rightarrow 0$ (along the subsequence) in \eqref{x-diff}, we get that the limit $w_i^{\delta}(x,p,\cdot,\mu)\in C^2$ satisfies the equation
\be\label{x-dereq}
\delta w_i^{\delta} -\text{tr}\left(aD^2_{yy}w_i^{\delta}\right)+H_{q}\left(x,y,p,D_yv^{\delta}(x,p,y),\mu\right)\cdot D_yw_i^{\delta}=-H_{x_i}\left(x,y,p,D_yv^{\delta}(x,p,y),\mu\right),
\ee
in the classical sense. Since \eqref{x-dereq} has a unique solution, we deduce $V_i^{\delta}(y)/|h|\xrightarrow{|h|\rightarrow 0} w_i^{\delta}(x,p,y,\mu)$ in $C^2$. Thus, $\partial_{x_i}v^{\delta}(x,p,y,\mu)=w_i^{\delta}(x,p,y,\mu)$. We also note that $\partial_{y_j}\partial_{x_i}v^{\delta}(x,p,y,\mu)=\partial_{y_j}w_i^{\delta}(x,p,y,\mu)$ for $j=1,...,d$. Furthermore, by the $C^2$ convergence we get $\partial_{x_i}\partial_{y_j}v^{\delta}=\partial_{y_j}w_i^{\delta}(x,p,y,\mu)$ for $j=1,...,d$. By repeating the same argument in all directions, we conclude that $D_xv^{\delta}(x,p,y,\mu)$ and $D^2_{yx}v^{\delta}(x,p,y,\mu)=D^2_{xy}v^{\delta}(x,p,y,\mu)$ exist and are continuous in $x,y$.
\vspace{2mm}

\noindent
For the derivative with respect to $p$ we use a similar strategy. Let $i\in \{1,...,d\}$ and $h$ be a vector which is parallel to $(0,...,0,1,0,...,0)\in \R^d$, where $1$ is in the $i$-th position, with $|h|<1$. The difference $\tilde{V}_i^{\delta}(y):=v^{\delta}(x,p+h,y,\mu)-v^{\delta}(x,p,y,\mu)$ satisfies
\be \label{p-diff}
\begin{split}
    \delta\tilde{V}_i^{\delta}-\text{tr}\left(aD^2_{yy}\tilde{V}_i^{\delta}\right)+\int_0^1H_{q}&\left(x,y,sh+p,sD_y\tilde{V}_i^{\delta}+D_yv^{\delta}(x,p,y),\mu\right)ds\cdot D_y\tilde{V}_i^{\delta}\\
    &=-|h| \int_0^1H_{p_i}\left(x,y,sh+p,sD_y\tilde{V}_i^{\delta}+D_yv^{\delta}(x,p,y),\mu\right)ds.
\end{split}
\ee
By arguing as in the $x$-derivative case, by assumption (A3)(3) (equation \eqref{A2p}) on $H$ we have
\be\label{pinfinityestimate}
\|\tilde{V}_i^{\delta}(\cdot)\|_{L^{\infty}}\leq \frac{C_H}{\delta}(1+|p|+|h|)|h|\leq C'|h|,
\ee
for some constant $C'>0$, since $|h|<1$ and $p$ is fixed. In addition, the two integrals in \eqref{p-diff} are $\beta$-H\"older continuous with respect to $y$ (recall assumption (A3)(3)), therefore by the Schauder estimates, we also have $\|\tilde{V}_i^{\delta}\|_{C^{2,\beta}}\leq C|h|$ for some constant $C>0$. Now, as in the $x$-derivative case we can argue that $\tilde{V}_i^{\delta}/|h|$ converges in $C^2$, as $|h|\rightarrow 0$, to the unique classical solution of 
\be\label{p-dereq}
\delta \tilde{w}_i^{\delta}-\text{tr}\left(aD^2_{yy}\tilde{w}_i^{\delta}\right)+H_{q}\left(x,y,p,D_yv^{\delta}(x,p,y),\mu\right)\cdot D_y\tilde{w}_i^{\delta}=-H_{p_i}\left(x,y,p,D_yv^{\delta}(x,p,y),\mu\right).
\ee
Thus, since there is a unique function satisfying \eqref{p-dereq}, we have $\tilde{V}_i^{\delta}/|h|\xrightarrow{|h|\rightarrow 0} \tilde{w}_i^{\delta}$, which implies that $\partial_{p_i}v^{\delta}(x,p,y,\mu)=\tilde{w}_i^{\delta}(x,p,y,\mu)$ and $\partial_{y_j}\partial_{p_1}v^{\delta}(x,p,y,\mu)=\partial_{y_j}\tilde{w}_i^{\delta}(x,p,y,\mu)$ for $j=1,...,d$. Furthermore, by the convergence in $C^2$, we also have $\partial_{p_i}\partial_{y_j}v^{\delta}(x,p,y,\mu)=\partial_{y_j}\tilde{w}_i^{\delta}(x,p,y,\mu)$ for $j=1,...,d$. Therefore, we deduce that $D_pv^{\delta}, D^2_{py}v^{\delta},D^2_{yp}v^{\delta}$ exist and $D^2_{yp}v^{\delta}=D^2_{py}v^{\delta}$.
\vspace{2mm}

\noindent
Before continuing with the next step, we note that the bounds $\|V^{\delta}_i\|_{C^{2,\beta}}\leq C(1+|p|)|h|$ and $\|\tilde{V}^{\delta}_i\|_{C^{2,\beta}}\leq C(1+|p|)|h|$ for $i=1,...,d$ imply that $x\mapsto D_yv^{\delta}(x,p,y,\mu)$ is Lipschitz continuous with Lipschitz constant $C(1+|p|)$ and the map $p\mapsto D_yv^{\delta}(x,p,y,\mu)$ is continuous.
\vspace{2mm}

\noindent
\textit{Step 3.} (second derivatives of $v^{\delta}$)\\
We fix $i,j\in \{1,...,d\}$. We will first show that the map $x\mapsto D_yw_i^{\delta}(x,p,y,\mu)$ is continuous, where $w_i^{\delta}$ is the unique solution of \eqref{x-dereq}. Let $h$ be a vector parallel to $(0,...,0,1,0,...,0)\in\R^d$, where the $1$ is in the $j$-th position. The function $W^{\delta}_{ij}(y):=w^{\delta}_i(x+h,p,y,\mu)-w^{\delta}_i(x,p,y,\mu)$ satisfies 
\begin{align}
\delta W&^{\delta}_{ij}-\text{tr}\left(aD_{yy}^2W^{\delta}_{ij}\right)+H_q(x,y,p,D_yv^{\delta},\mu)\cdot D_yW^{\delta}_{ij}\nonumber\\
&=-|h|\int_0^1 H_{x_jx_i}\left(sh+x,y,p, sD_yV^{\delta}_j+D_yv^{\delta},\mu\right)ds\nonumber\\
&\hspace{2cm}-\int_0^1 \partial_{x_j}H_q\left(sh+x,y,p,sD_yV^{\delta}_j+D_yv^{\delta},\mu\right)ds\cdot D_yV^{\delta}_j\nonumber\\
&\hspace{2.5cm}-|h|\int_0^1\partial_{x_j}H_q\left(sh+x,y,p,sD_yV^{\delta}_j+D_yv^{\delta},\mu\right)ds\cdot D_yw_i^{\delta}(x+h)\nonumber\\
&\hspace{3cm}-\int_0^1 H_{qq}\left(sh+x,y,p,sD_yV^{\delta}_j+D_yv^{\delta},\mu\right)D_yV^{\delta}_jds\cdot D_yw_i^{\delta}(x+h)\label{2x-diff},
\end{align}
where $V^{\delta}_i$, for $i=1,...,d$, was defined right before \eqref{x-diff}. By Step 2, we have a uniform H\"older bound for $w_i^{\delta}$, namely $\|w_i^{\delta}(x)\|_{C^{2,\beta}}\leq C(1+|p|)$ for some $C>0$ independent of $x$. Combining this with \eqref{estimate} and $\|V^{\delta}\|_{C^{2,\beta}}\leq C|h|$ we have the following bounds for the terms on the right hand side of \eqref{2x-diff}
\begin{gather*}
  \bigg| |h|\int_0^1 H_{x_jx_i}\left(sh+x,y,p, sD_yV^{\delta}_j+D_yv^{\delta},\mu\right)ds\bigg| \leq C|h|,\\
  \bigg|\int_0^1 \partial_{x_j}H_q\left(sh+x,y,p,sD_yV^{\delta}_j+D_yv^{\delta},\mu\right)ds\cdot D_yV^{\delta}_j\bigg| \leq C|h|,\\
\bigg||h|\int_0^1\partial_{x_j}H_q\left(sh+x,y,p,sD_yV^{\delta}_j+D_yv^{\delta},\mu\right)ds\cdot D_yw_i^{\delta}(x+h)\bigg|\leq C|h|,\\
\bigg|\int_0^1 H_{qq}\left(sh+x,y,p,sD_yV^{\delta}_j+D_yv^{\delta},\mu\right)D_yV^{\delta}_jds\cdot D_yw_i^{\delta}(x+h)\bigg|\leq C|h|.
\end{gather*}
Thus, by the maximum principle, we have $\|W^{\delta}_{ij}(\cdot)\|_{L^{\infty}}\leq C|h|$. In addition, by the properties of $H$, all these terms are H\"older continuous in the $y$ variable, hence by the Schauder estimates we get $\|W^{\delta}_{ij}(\cdot)\|_{C^{2,\beta}}\leq C|h|$. Since $j$ was arbitrary, these estimates yield the continuity of $x\mapsto D_yw_i^{\delta}(x,p,y,\mu)$.
\vspace{2mm}

\noindent 
We introduce the function $\tilde{W}^{\delta}_{ij}(y)=\tilde{w}_i^{\delta}(x,p+h,y,\mu)-\tilde{w}_i^{\delta}(x,p,y,\mu)$, where $i,j,h$ are the same as above and $\tilde{w}_i^{\delta}$ is the solution of \eqref{p-dereq}. We will show that the map $p\mapsto D_y\tilde{w}^{\delta}_i(x,p,y,\mu)$ is continuous. Indeed, $\tilde{W}^{\delta}_{ij}$ satisfies
\begin{align}
\delta \tilde{W}^{\delta}_{ij}&-\text{tr}\left( aD^2_{yy}\tilde{W}^{\delta}_{ij}\right)+H_q(x,y,p,D_yv^{\delta}(p),\mu)\cdot D_y\tilde{W}^{\delta}_{ij}\nonumber\\
&=-|h|\int_0^1 H_{p_jp_i}\left(x,y,sh+p,sD_y\tilde{V}^{\delta}_j+D_yv^{\delta},\mu\right)ds\nonumber\\
&\hspace{1.5cm}-\int_0^1 \partial_{p_j}H_q\left(x,y,sh+p,sD_y\tilde{V}^{\delta}_j+D_yv^{\delta},\mu\right)ds\cdot D_y\tilde{V}_j^{\delta}\nonumber\\
&\hspace{2cm}-|h|\int_0^1\partial_{p_j}H_q\left(x,y,sh+p,sD_y\tilde{V}^{\delta}_j+D_yv^{\delta},\mu\right)ds\cdot D_y\tilde{w}^{\delta}_i(p+h)\nonumber\\
&\hspace{2.5cm} -\int_0^1 H_{qq}\left(x,y,sh+p,sD_y\tilde{V}^{\delta}_j+D_yv^{\delta},\mu\right)D_y\tilde{V}^{\delta}_jds\cdot D_y\tilde{w}^{\delta}_i(p+h),\label{2p-diff}
\end{align}
where $\tilde{V}^{\delta}_j$, for $j=1,...,d$ was defined in \eqref{p-diff}. Again by Step 2, we have the bounds $\|\tilde{w}_i^{\delta}\|_{C^{2,\beta}}\leq C$ and $\|\tilde{V}^{\delta}_i\|_{C^{2,\beta}}\leq C|h|$, where $C$ depends on $p$. Thus, we can show with a similar argument as before this time for equation \eqref{2p-diff} that the map $p\mapsto D_y\tilde{w}_i^{\delta}(x,p,y,\mu)$ is continuous. 
\vspace{1mm}

\noindent
Using the continuity of both maps $x\mapsto D_yw_i^{\delta}(x,p,y,\mu)$ and $p\mapsto D_y\tilde{w}_i^{\delta}(x,p,y,\mu)$, we can prove that $D^2_{xx}v^{\delta}, D^2_{xp}v^{\delta}, D^2_{px}v^{\delta}$ and $D^2_{xx}v^{\delta}$ exist and are continuous in $x,y,p$. Indeed, in light of these properties, the right hand side in \eqref{x-dereq} and the coefficient of $D_yw_i^{\delta}$ in \eqref{x-dereq} are $C^1$ functions with respect to the $x$ and $p$ variables, therefore we can apply Lemma \ref{parametric} for \eqref{x-dereq} to deduce that $(x,p)\mapsto w_i^{\delta}(x,p,y,\mu)=\partial_{x_i}v^{\delta}$ is also a $C^1$ function, $i=1,...,d$. Hence, $D^2_{xx}v^{\delta}$ and $D^2_{px}v^{\delta}$ exist and are continuous in $x,p$. We can similarly apply Lemma \ref{parametric} to \eqref{p-dereq} to deduce that $D^2_{pp}v^{\delta}$ and $D^2_{xp}v^{\delta}$ exist and are continuous in $x,p$.
\vspace{2mm}

\noindent
\textit{Step 4.} (regularity of $\overline{H}$, \eqref{regofeff})\\
Let $x,x'\in \T^d$, $p,p'\in \R^d$ and $\mu,\mu'\in\mathcal{P}(\T^d)$. By \eqref{infinityestimate} for $i=1,...,d$, we get for any $y\in \T^d$
\be\label{eq1001}
|\delta v^{\delta}(x',p',y)-\delta v^{\delta}(x,p',y)|\leq C|x'-x|(1+|p'|).
\ee
On the other hand, by \eqref{pinfinityestimate} for $i=1,...,d$ (note that the condition $|h|<1$ is used only on the second part of the inequality) we have
\be\label{eq1002}
|\delta v^{\delta}(x,p',y)-\delta v^{\delta}(x,p,y)|\leq C|p-p'|( |p|+|p'|),\text{ for any }y\in\T^d
\ee
Moreover, by considering the optimality conditions for the maximum and the minimum of the function $y\mapsto \delta v^{\delta}(x',p',y,\mu')-\delta v^{\delta}(x',p',y,\mu)$, as in Step 2, and by using assumption (A3)(2), we can prove that
\be\label{eq1003}
|\delta v^{\delta}(x',p',y,\mu')-\delta v^{\delta}(x',p',y,\mu)|\leq C(1+|p'|){\bf d}_1(\mu,\mu'),\text{ for any }y\in\T^d.
\ee
We add \eqref{eq1001}, \eqref{eq1002}, \eqref{eq1003} and we send $\delta\rightarrow 0$ to discover, by virtue of \eqref{effectiveapp},
\begin{align*}
|\overline{H}(x',p',\mu')-\overline{H}(x',p',\mu)|+|\overline{H}(x',p',\mu)- \overline{H}(&x,p',\mu)|+|\overline{H}(x,p',\mu)-\overline{H}(x,p,\mu)|\\
&\leq C_H(1+|p|+|p'|)\left(|x-x'|+|p-p'|+{\bf d}(\mu,\mu')\right).
\end{align*}
Estimate \eqref{regofeff} follows by applying the triangle inequality.

\end{proof}

\vspace{2mm}

\noindent
We are now ready to prove Theorem \ref{main}. The idea of the proof is to adapt the formal calculation from the previous subsection to Definitions \ref{defz} and \ref{vdef} and to adjust the perturbed test function method for Hamilton-Jacobi equations in finite dimensions (e.g., \cite{evans1989perturbed}) to this infinite dimensional setting.
\vspace{3mm}

\begin{proof}
Let $\overline{H}$ be the function we constructed in Proposition \ref{lemma1}. By Corollary \ref{unbound}, the family $U^{\e}(t,m)$ is uniformly bounded (the uniform bound depends only on $\mathcal{G}$ and $||H(\cdot,\cdot,0, 0,\cdot)||_{L^{\infty}}$), hence we can define the upper and lower semi-limits for $(t,\mu)\in [0,T]\times\mathcal{P}(\mathbb{T}^d)$:
\begin{align}
&\overline{U}(t,\mu)=\limsup_{\substack{\e\rightarrow 0 \\ t'\rightarrow t\\ \mu'\rightarrow \mu} }\sup_{\substack{m\in \mathcal{P}(\mathbb{T}^{2d}):\\
\pi^1_*m=\mu'}} U^{\e}(t',m) \label{semi}, \\
&\underline{U}(t,\mu)=\liminf_{\substack{\e\rightarrow 0 \\ t'\rightarrow t\\ \mu'\rightarrow \mu} }\inf_{\substack{m\in \mathcal{P}(\mathbb{T}^{2d}):\\
\pi^1_*m=\mu'}} U^{\e}(t',m). \label{lowersemi}
\end{align}
We will show that $\overline{U}$ is a viscosity subsolution of \eqref{HJ2} and that $\underline{U}$ is a viscosity supersolution of \eqref{HJ2}. Then, since \eqref{regofeff} holds, Theorem \ref{joe} applies for \eqref{HJ2}; therefore  $\underline{U}\geq \overline{U}$. Combining this with the obvious fact that $\underline{U}\leq \overline{U}$, we deduce $\underline{U}=\overline{U}:=U$. Since both semi-limits are equal to $U$, the uniform convergence of $U^{\e}$ to $U$ that we need to prove. We will only show that $\overline{U}$ is a subsolution, the proof for the supersolution being analogous.
\vspace{2mm}

\noindent
To prove the first part of the definition $\widetilde{\overline{U}}(T,z,\mu)\leq \widetilde{\mathcal{G}}(z,\mu)$ for any $(z,\mu)\in \T^d\times\mathcal{P}(\T^d)$, we rely on \cite[Theorem 4.2]{cosso2023smooth}\footnote{The result in this paper is stated for functions defined on $\mathcal{P}_2(\R^d)$, but the proof can be easily adapted for functions defined on $\mathcal{P}(\T^d)$.}
to get a sequence $(\mathcal{G}^n)_{n\in\mathbb{N}}:\mathcal{P}(\T^d)\rightarrow \R$ of uniformly Lipschitz\footnote{The Lipschitz constant of $\mathcal{G}^n$ is the same as that of $\mathcal{G}$.} and $C^{2,2}(\mathcal{P}(\T^d))$ functions such that $\mathcal{G}^n\xrightarrow{n\rightarrow \infty}\mathcal{G}$ uniformly. For any $\delta>0$ there exists $n_0\in\mathbb{N}$ such that $\mathcal{G}(\mu)\leq \mathcal{G}^n(\mu)+\delta$ for any $n\geq n_0$ and $\mu\in\mathcal{P}(\T^d)$. The regularity of $\mathcal{G}^n$ implies that there exists a constant $C_n>0$ such that $V^{n,\delta}(t,m):=\mathcal{G}^n(\pi^1_*m)+C_n(T-t)+\delta$ is a classical supersolution of \eqref{HJ1}, where $n\geq n_0$. Therefore, by comparison principle, we have $U^{\e}(t,m)\leq V^{n,\delta}(t,m)$. By taking the upper semi-limit in this inequality we get $\overline{U}(T,\mu)\leq \mathcal{G}^n(\mu)+\delta$ for any $n\geq n_0$. The result follows by sending $n\rightarrow \infty$ and $\delta\rightarrow 0$.
\vspace{2mm}


\noindent
For the second part we argue by contradiction. Assume that there exists a test function (Definitions \ref{test functions}, \ref{defz} and \ref{vdef}) $\varphi$ and a point $(\overline{t},\bar{z},\overline{\mu})\in [0,T)\times\T^d\times \mathcal{P}(\mathbb{T}^d)$ such that
$\widetilde{\bar{U}}(t,z,\mu)-\varphi(t,z,\mu)$ has a strict maximum with value $0$ (recall Remark \ref{strict}) at $(\overline{t},\bar{z},\overline{\mu})$
and 
\be \label{fi}
\begin{split} 
-\partial_t \varphi(\overline{t},\bar{z},&\overline{\mu}) +\int_{\mathbb{T}^d}\overline{H}\left(x+\overline{z},D_{\mu}\varphi(\overline{t},\overline{z},\overline{\mu},x),\overline{\mu}\right)\overline{\mu}(dx)\\
&-\int_{\mathbb{T}^d}\text{tr}\left(\sigma_1\sigma_1^{\top}D_xD_{\mu}\varphi(\overline{t},\overline{\mu},\overline{\mu},x)\right)\overline{\mu}(dx)-\text{tr}\left(\sigma_{0,1}\sigma_{0,1}^\top D^2_{zz}\varphi (\bar{t},\bar{z},\bar{\mu})\right)\geq 10\eta.
\end{split}
\ee
for some $\eta>0$. We will assume that $\overline{t}>0$; the case $\overline{t}=0$ can be treated similarly. Note also that, due to Lemma \ref{regularization} and \eqref{regofeff}, we may assume that $\varphi$ is such that $D^2_{xx}D_m\varphi$ and $D_xD_m\varphi$ exist and are continuous with \eqref{fi} still holding (perhaps for a different $\eta$) and $\widetilde{\bar{U}}(t,z,\mu)-\varphi(t,z,\mu)$ having a strict maximum with value $0$ at $(\overline{t},\overline{z},\overline{\mu})$ (perhaps for a different $(\overline{t},\overline{z},\overline{\mu})$). By Proposition \ref{lemma1} after setting $x\mapsto x+\overline{z}$ and $p\mapsto D_{\mu}\varphi(\overline{t},\overline{z},\overline{\mu},x)$, for every $\delta>0$ and $x,z\in \T^d$ there exists a function $v_{x,z}^{\delta}$ such that
\begin{align} 
\delta v^{\delta}-\text{tr}\left((\sigma_2\sigma_2^{\top}+\sigma_{0,2}\sigma_{0,2}^{\top})D^2_{yy} v^{\delta}\right)+H\left(x+\overline{z},y+z,D_{\mu}\varphi(\overline{t},\overline{z},\overline{\mu},x),D_yv^{\delta},\overline{\mu}\right)=0, \label{vdelta}
\end{align}
for all $y\in \mathbb{T}^d$. It is clear that $v^{\delta}$ has only two arguments $v^{\delta}_{x,z}(\cdot)=v^{\delta}(x,\cdot+z)$ and, furthermore, $\delta$ can be chosen small enough such that
\be \label{est1}
\left\| \delta v^{\delta}(x,z+\cdot)+\overline{H}\left(x+\overline{z},D_{\mu}\varphi(\overline{t},\overline{z},\overline{\mu},x),\overline{\mu}\right)\right\|_{L^{\infty}}\leq \eta,\;\; \text{for any }x,z\in \mathbb{T}^d.
\ee
We now consider the function $\psi^{\e,\delta}:[0,T]\times\T^d\times\T^d\times\mathcal{P}(\T^{2d})$ with $\psi^{\e,\delta}(t,z_1,z_2,m):=\varphi(t,z_1,\pi^1_*m)+\e \int_{\mathbb{T}^{2d}}v^{\delta}(x,y+z_2)m(dx,dy)$. For $\mu\in\mathcal{P}(\T^d)$, we have by the uniform convergence of $\psi^{\e,\delta}$ to $\varphi$ (as $\e\rightarrow 0$)
\begin{align*}
\limsup_{\substack{\e\rightarrow 0 \\ t'\rightarrow t\\ \mu'\rightarrow \mu} }\sup_{\substack{m\in \mathcal{P}(\mathbb{T}^{2d}):\\
\pi^1_*m=\mu'}} (\widetilde{U}^{\e}-\psi^{\e,\delta})(t',z_1,z_2,m)
&=\limsup_{\substack{\e\rightarrow 0 \\ t'\rightarrow t\\ \mu'\rightarrow \mu} }\sup_{\substack{m\in \mathcal{P}(\mathbb{T}^{2d}):\\
\pi^1_*m=\mu'}}\widetilde{U}^{\e}(t',z_1,z_2,m)-\varphi(t,z_1,\mu)\\
&=\limsup_{\substack{\e\rightarrow 0 \\ t'\rightarrow t\\ \mu'\rightarrow (\text{Id}+z_1)_*\mu} }\sup_{\substack{m\in \mathcal{P}(\mathbb{T}^{2d}):\\
\pi^1_*m=\mu'}}U^\e(t',m)-\varphi(t,z_1,\mu)\\
&=\bar{U}(t,(\text{Id}+z_1)_*\mu)-\varphi(t,z_1,\mu)\\
&=\widetilde{\bar{U}}(t,z_1,\mu)-\varphi(t,z_1,\mu).
\end{align*}
Since $(\overline{t},\overline{z},\overline{\mu})$ is a position of strict maximum for $\widetilde{\overline{U}}-\varphi$, there exists an $r>0$ small enough such that the above limit is negative for every $(t,z_1,\mu)\in (\overline{t}-r,\overline{t}+r)\times\{z: {\bf d}_{\T^d}(z,\overline{z})\le r\}\times  B_r(\mu)$, where $B_r(\mu)=\{ \mu\in \mathcal{P}(\mathbb{T}^d): {\bf d}_1(\mu,\overline{\mu})\le r\}$ and ${\bf d}_{\T^d}$ is the distance between points in $\T^d$.
However, by compactness, this implies that there exists an $\tilde{\eta}>0$ such that 
$\widetilde{U}^{\e}(t,z_1,z_2,m)-\psi^{\e,\delta}(t,z_1,z_2,m)\leq -\tilde{\eta}$ for every $(t,z_1,z_2,m)$ on the boundary of the set 
$$A_r:=(\overline{t}-r,\overline{t}+r)\times\{(z_1,z_2)\in\T^{2d}: {\bf d}_{\T^d}(z_1,\overline{z})\le r\}\times \left\{ m\in \mathcal{P}(\mathbb{T}^d): {\bf d}_1(\pi^1_*m,\overline{\mu})<r\right\}.$$ 
We claim that for $\e$ small enough
\be \label{claim}
\widetilde{U}^{\e}(t,z_1,z_2,m)-\psi^{\e,\delta}(t,z_1,z_2,m)\leq -\tilde{\eta},\;\;\text{in }A_r.
\ee 
\vspace{0.5mm}

\noindent
Before proving \eqref{claim}, we explain why it gives a contradiction. Indeed, if \eqref{claim} holds, then taking the semi-limit on both sides yields
$$\widetilde{\bar{U}}(t,z_1,\mu)-\varphi(t,z_1,\mu)\leq -\tilde{\eta},$$
for every $t\in (\overline{t}-r,\overline{t}+r)$, $z_1\in \{ z\in \T^d: {\bf d}_{\T^d}(z,\overline{z})\le r\}$ and $\mu\in B_r$. In particular, setting $t=\overline{t}$, $z_1=\overline{z}$ and $\mu=\overline{\mu}$, we get a contradiction.
\vspace{2mm}

\noindent
It remains to show \eqref{claim}. If \eqref{claim} is not true, we may suppose that 
$$\max_{\overline{A_r}}\bigg\{\widetilde{U}^{\e}(t,z_1,z_2,m)-\psi^{\e,\delta}(t,z_1,z_2,,m)\bigg\}=\widetilde{U}^{\e}(t_0,z_{0,1},z_{0,2},m_0)-\psi^{\e,\delta}(t_0,z_{0,1},z_{0,2},m_0),$$
where $(t_0,z_{0,1},z_{0,2},m_0)$ is an interior point of $A_r$.
We observe that, due to the regularity of $v^{\delta}$ proved in Proposition \ref{lemma1}, $\psi^{\e,\delta}$ is a test function for \eqref{HJ1} (see Definition \ref{test functions}), hence by the definition of viscosity solutions
\begin{align}\label{calc}
&-\partial_t\varphi(t_0,z_{0,1},\pi^1_*m_0)\nonumber\\
&+\int_{\mathbb{T}^{2d}}H\left(x+z_{0,1},y+z_{0,2},D_{\mu}\varphi(t_0,z_{0,1},\pi^1_*m_0,x)+\e D_xv^{\delta}(x,y+z_{0,2}),D_yv^{\delta}(x,y+z_{0,2}),\pi^1_*m_0\right)m_0(dx,dy)\nonumber \\
&
\hspace{0.5cm}-\int_{\mathbb{T}^{2d}}\text{tr}\left(\sigma_1\sigma_1^{\top}D_xD_{\mu}\varphi(t_0,z_{0,1},\pi^1_*m_0,x)\right)\pi^1_*m_0(dx)\nonumber\\
&\hspace{1cm}-\e\int_{\mathbb{T}^{2d}}\text{tr}\left(\sigma_1\sigma_1^{\top}D^2_{xx}v^{\delta}(x,y+z_{0,2})\right)m_0(dx,dy)\nonumber\\
&\hspace{1.5cm}-\int_{\mathbb{T}^{2d}}\text{tr}\left(\sigma_2\sigma_2^{\top}D^2_{yy}v^{\delta}(x,y+z_{0,2})\right)m_0(dx,dy)\nonumber\\
&\hspace{0.8cm}-\text{tr}\left(\sigma_{0,1}\sigma_{0,1}^{\top}D^2_{z_1z_1}\varphi(t_0,z_{0,1},\pi^1_*m_0)\right)-\int_{\T^{2d}}\text{tr}\left( \sigma_{0,2}\sigma_{0,2}^{\top}D^2_{z_2z_2}v^{\delta}(x,y+z_{0,2})\right)m_0(dx,dy)\leq 0.
\end{align}
Since $r$ is small enough and $(t_0,z_{0,1},z_{0,2},m_0)\in A_r$, by the uniform continuity of $\partial_t\varphi, D^2_{zz}\varphi$ and $D_xD_{\mu}\varphi$ and the Lipschitz continuity of $D_xD_{\mu}\varphi(t,z_1,\mu,\cdot)$, we may assume that
\be\label{est2}
|\partial_t\varphi(t_0,z_{0,1},\pi^1_*m_0)-\partial_t\varphi(\overline{t},\overline{z},\overline{\mu})|\leq \tilde{C}\eta,
\ee 
\be \label{est3}
\bigg| \int_{\mathbb{T}^d}\text{tr}\left(\sigma_1\sigma_1^{\top}D_xD_{\mu}\varphi(t_0,z_{0,1},\pi^1_*m_0,x)\right)\pi^1_*m_0(dx)-\int_{\mathbb{T}^d}\text{tr}\left(\sigma_1\sigma_1^{\top}D_xD_{\mu}\varphi(\overline{t},\overline{z},\overline{\mu},x)\right)\overline{\mu}(dx)\bigg|\leq \tilde{C}\eta,
\ee 
\be \label{est98}
\left| \text{tr}\left( \sigma_{0,1}\sigma_{0,1}^{\top}D^2_{zz}\varphi(t_0,z_{0,1},\pi^1_*m_0) \right) -\text{tr}\left( \sigma_{0,1}\sigma_{0,1}^{\top}D^2_{zz}\varphi(\overline{t},\overline{z},\overline{\mu}) \right)\right|\leq \tilde{C}\eta,
\ee
for some constant $\tilde{C}$ small enough. In addition, by the properties of $H$ (assumption (A3), equation \eqref{A2'}) and the uniform continuity of $D_{\mu}\varphi$ we have
\begin{align}
\bigg|H&\left(x+z_{0,1},y+z_{0,2},D_{\mu}\varphi(t_0,z_{0,1},\pi^1_*m_0,x)+\e D_xv^{\delta}(x,y+z_{0,2}),D_yv^{\delta}(x,y+z_{0,2}),\pi^1_*m_0\right)\nonumber\\
&\hspace{5.5cm}-H\left( x+\overline{z},y+z_{0,2},D_{\mu}\varphi(\overline{t},\overline{z},\overline{\mu},x),D_yv^{\delta}(x,y+z_{0,2}),\overline{\mu} \right)\bigg|\nonumber\\
&\leq C_H\left(1+\|D_yv^{\delta}\|_{\infty}+\e \|D_xv^{\delta}\|_{\infty}+\|D_{\mu}\varphi\|_{\infty}\right)\nonumber\\
&\hspace{2cm}\cdot\left(|D_{\mu}\varphi(t_0,z_{0,1},\pi^1_*m_0,x)-D_{\mu}\varphi(\overline{t},\overline{z},\overline{\mu},x)|+|z_{0,1}-\overline{z}|+\e\|D_xv^{\delta}\|_{\infty}+{\bf d}_1(\pi^1_*m_0,\overline{\mu})\right)\nonumber\\
&\leq C\left(|D_{\mu}\varphi(t_0,z_{0,1},\pi^1_*m_0,x)-D_{\mu}\varphi(\overline{t},\overline{z},\overline{\mu},x)|+r+\e+r\right)\leq \eta +C\e\label{est5},
\end{align}
for some constant $C$ and $r$ small enough. We combine \eqref{est2}, \eqref{est3}, \eqref{est98}, 
\eqref{est5} with \eqref{calc}
to discover
\begin{align}\label{est6}
-\partial_t\varphi(\overline{t},&\overline{z},\overline{\mu})+\int_{\mathbb{T}^{2d}}H\left(x+\overline{z},y+z_{0,2},D_{\mu}\varphi(\overline{t},\overline{z},\overline{\mu},x),D_yv^{\delta}(x,y+z_{0,2}),\overline{\mu}\right)m_0(dx,dy)\nonumber\\
&-\int_{\mathbb{T}^{2d}}\text{tr}\left(\sigma_1\sigma_1^{\top}D_xD_{\mu}\varphi(\overline{t},\overline{z},\overline{\mu},x)\right)\overline{\mu}(dx)\nonumber \\
&-\e\int_{\mathbb{T}^{2d}}\text{tr}\left(\sigma_1\sigma_1^{\top}D^2_{xx}v^{\delta}(x,y+z_{0,2})\right)m_0(dx,dy)\nonumber\\
&-\int_{\mathbb{T}^{2d}}\text{tr}\left(\sigma_2\sigma_2^{\top}D^2_{yy}v^{\delta}(x,y+z_{0,2})\right)m_0(dx,dy)\nonumber\\
&-\text{tr}\left( \sigma_{0,1}\sigma_{0,1}^{\top}D^2_{zz}\varphi(\overline{t},\overline{z},\overline{\mu}) \right)-\int_{\T^{2d}}\text{tr}\left( \sigma_{0,2}\sigma_{0,2}^{\top}D^2_{z_2z_2}v^{\delta}(x,y+z_{0,2})  \right)m_0(dx,dy)\leq 4\eta +C\e.
\end{align}
Now, using the fact that $D^2_{zz}v^{\delta}(x,y+z)=D^2_{yy}v^{\delta}(x,y+z)$ combined with \eqref{vdelta} and the continuity of $D^2_{xx}v^{\delta}$ provided by Proposition \ref{lemma1}, \eqref{est6} becomes
\be \label{est7}
\begin{split}
-\partial_t\varphi&(\overline{t},\overline{z},\overline{\mu})-\int_{\mathbb{T}^{2d}} \delta v^{\delta}(x,y+z_{0,2})  m_0(dx,dy)-\e C_1 \\
&-\int_{\mathbb{T}^{2d}}\text{tr}\left(\sigma_1\sigma_1^{\top}D_xD_{\mu}\varphi(\overline{t},\overline{z},\overline{\mu},x)\right)\overline{\mu}(dx)-\text{tr}\left(\sigma_{0,1}\sigma_{0,1}^{\top}D^2_{zz}\varphi(\overline{t},\overline{z},\overline{\mu})\right)  \leq 4\eta +C\e,
\end{split}
\ee
for some constant $C_1$. However, with the help of estimate \eqref{est1}, this implies
\be \label{est8}
\begin{split}
-\partial_t&\varphi(\overline{t},\overline{z},\overline{\mu})+\int_{\mathbb{T}^{d}}\overline{H}(x+\overline{z},D_{\mu}\varphi(\overline{t},\overline{z},\overline{\mu},x),\overline{\mu})\pi^1_*m_0(dx)\\
&-\int_{\mathbb{T}^{2d}}\text{tr}\left(\sigma_1\sigma_1^{\top}D_xD_{\mu}\varphi(\overline{t},\overline{\mu},x)\right)\overline{\mu}(dx)-\text{tr}\left(\sigma_{0,1}\sigma_{0,1}^{\top}D^2_{zz}\varphi (\overline{t},\overline{z},\overline{\mu}) \right) \leq 3\eta +\e (C_1+C).
\end{split}
\ee
Finally, due to \eqref{regofeff} and the regularity $\varphi$, $\overline{H}\left(x+\overline{z},D_{\mu}\varphi(\bar{t},\overline{z},\bar{\mu},x),\overline{\mu}\right)$ is Lipschitz in $x$, so for $r$ small enough we can write
$$\bigg|\int_{\mathbb{T}^{d}}\overline{H}\left(x+\overline{z},D_{\mu}\varphi(\overline{t},\overline{z},\overline{\mu},x)\right)(\pi^1_*m_0-\overline{\mu})(dx)\bigg|\leq C'{\bf d}_1(\pi^1_*m_0,\overline{\mu})\leq C'r\leq \eta.$$
Since $C,C_1,$ depend only on $H,\varphi$ and the $C^{2}$-norm of $v^{\delta}$ (which is independent of $\delta$, by our arguments in Proposition \eqref{lemma1}) we may also assume that $\e$ is small enough such that $\e (C+C_1)\leq \eta$, hence \eqref{est8} becomes
\begin{align*} 
-\partial_t \varphi(\overline{t},\bar{z},&\overline{\mu}) +\int_{\mathbb{T}^d}\overline{H}\left(x+\overline{z},D_{\mu}\varphi(\overline{t},\overline{z},\overline{\mu},x),\overline{\mu}\right)\overline{\mu}(dx)\nonumber\\
&-\int_{\mathbb{T}^d}\text{tr}\left(\sigma_1\sigma_1^{\top}D_xD_{\mu}\varphi(\overline{t},\overline{\mu},\overline{\mu},x)\right)\overline{\mu}(dx)-\text{tr}\left(\sigma_{0,1}\sigma_{0,1}^\top D^2_{zz}\varphi (\bar{t},\bar{z},\bar{\mu})\right)\leq 4\eta.
\end{align*}
This contradicts \eqref{fi}.
\end{proof}


\vspace{3mm}

\subsection{Proof of Corollary \ref{linear case}}
We finally prove Corollary \ref{linear case}.

\begin{proof}
Under our assumptions, $U^{\e}(t,m)= \mathcal{G}\left(\mathcal{L}(X_T^{\e,t,m})\right)$ is the unique viscosity solution of \eqref{HJ1}. In addition, for $x\in\T^d,p\in \R^d$ and $\mu=\pi^1_*m\in\mathcal{P}(\T^d)$, the effective Hamiltonian $\overline{H}(x,p,\mu)$ derived in Proposition \ref{lemma1} is such that the following equation has a unique solution up to a constant
\be\label{eff lin case}
-\text{tr}\left((\sigma_2\sigma_2^{\top}+\sigma_{0,2}\sigma_{0,2}^{\top})D^2_{yy}v\right)-p\cdot c(x,y,\mu)-f(x,y,\mu)\cdot D_yv- \overline{H}(x,p,\mu)=0,\;\;y\in \T^d.
\ee
It was proved in \cite{bensoussan2011asymptotic} (see also \cite{bezemek2023rate,zitridis2023homogenization}) that $\overline{H}(x,p,\mu)= \int_{\T^d} p\cdot c(x,y,\mu) \pi_{x,\mu}(dy)$, where $\pi_{x,\mu}$ is the invariant measure associated to the problem $-\text{tr}(\sigma_2\sigma_2^{\top}D^2_{yy}v)-f(x,y,\mu)\cdot D_yv=0$. We set $\overline{c}(x,\mu):=\int_{\T^d}c(x,y,\mu)\pi_{x,\mu}(dy)$, where the integration is coordinate by coordinate. By Theorem \ref{main} we have that the limit of $U^{\e}$ is the unique viscosity solution of the linear Hamilton-Jacobi equation posed in $[t_0,T]\times \mathcal{P}(\T^d)$
$$\begin{cases}
    -\partial_t U+\int_{\T^d}\overline{c}(x,\mu)\cdot D_{\mu}U(t,\mu,x)\mu(dx)-\int_{\T^d}\text{tr}\left( (\sigma_1\sigma_1^{\top}+\sigma_{0,1}\sigma_{0,1}^{\top})D_xD_{\mu}U(t,\mu,x)\right)\mu(dx)\\
    \hspace{4cm}-\int_{\T^d}\int_{\T^d}\text{tr}\left(\sigma_{0,1}\sigma_{0,1}^{\top}D^2_{\mu\mu}U(t,\mu,x,x') \right)\mu(dx)\mu(dx')=0,\\
    U(T,\mu)=\mathcal{G}(\mu).
\end{cases}$$
This is the Hamilton-Jacobi equation associated with \eqref{SDE cor}, hence following a standard dynamic programming principle argument (see \cite[Proposition 6.3]{daudin2023well}),
we deduce that $U(t,m)=\mathcal{G}\left(\mathcal{L}(X_T^{t,\pi^1_*m})\right)$ is its viscosity solution, which is also unique (Theorem \ref{joe}). The proof is complete.
\end{proof}

\appendix

\section{Wasserstein space and Wasserstein Derivatives}\label{appendix A}

\noindent
In this section, for the reader's convenience, we will gather the definitions of Wasserstein spaces and Wasserstein derivatives.

\begin{definition}\label{wass}
Let $n_0\in\mathbb{N}$ and $p\geq 1$. We define the $p-$Wasserstein space as 
$$\mathcal{P}_p(\T^{n_0})=\bigg\{\mu \text{ probability measure over }\T^{n_0} \bigg| \text{M}_p(\mu):=\int_{\T^{n_0}}|x|^p\mu(dx)<+\infty  \bigg\}$$
with metric ${\bf d}_p(\mu,\nu)=\inf_{\pi\in \Pi(\mu,\nu)}\bigg( \int_{\T^{2n_0}}|x-y|^p \pi(dxdy)\bigg)^{1/p}$.
\end{definition}
\vspace{1mm}

\begin{remark}\label{remarkappend}
(i) It is known that for any $p\geq 1$, the metrics ${\bf d}_p$ are equivalent.\\
(ii) The $p-$Wasserstein space $\mathcal{P}_p(\T^{n_0})$ is compact.\\
(iii) Let $p>1$. There exists $C:=C(p)>0$ such that for any $\mu_1,\mu_2\in \mathcal{P}_p(\T^{n_0})$ we have ${\bf d}_p(\mu_1,\mu_2)\geq {\bf d}_1(\mu_1,\mu_2)\geq C{\bf d}_p(\mu_1,\mu_2)^p$.\\
(iv) For $p=1$ we also have the formula ${\bf d}_1(\mu,\nu)=\sup_{f\in \text{1-Lip}}\int_{\T^{n_0}}f(x)(\mu-\nu)(dx)$.
\end{remark}
\vspace{1mm}

\begin{definition}
Let $U:\mathcal{P}_p(\T^{n_0})\rightarrow \R$ be a function for some $p\geq 1$. We say that $U$ is continuously differentiable or that it is $C^1$ in $\mathcal{P}_p(\T^{n_0})$ if there exists a continuous function $\frac{\delta U}{\delta m}:\mathcal{P}_p(\T^{n_0})\times \T^{n_0}\rightarrow \R$ such that for every $m_1,m_2\in\mathcal{P}_p(\T^{n_0})$ we have
$$\lim_{t\rightarrow 0}\frac{U(tm_1+(1-t)m_2)-U(m_2)}{t}=\int_{\T^{n_0}}\frac{\delta U}{\delta m}(m_2,x)(m_2-m_2)(dx).$$
If the $x$-derivative exists, we use the notation $D_mU(m,x):=D_x\frac{\delta U}{\delta m}(m,x)$, for $x\in \T^{n_0}$ and $m\in\mathcal{P}_p(\T^{n_0})$. $D_mU$ is called the Wasserstein derivative of $U$.
\end{definition}

\noindent
We can similarly define higher order Wasserstein derivatives. We prove the following proposition, which is taken from \cite[Proposition A.5]{zitridis2023homogenization}.
\begin{proposition}\label{Wass reg}
Let $U:\mathcal{P}_p(\T^{n_0})\rightarrow \mathbb{R}$ be a $C^1$ function and $\pi^i:\T^{2n_0}\rightarrow \T^{n_0}$ such that $\pi^i(x_1,x_2)=x_i$, $i=1,2$. Then, for $i=1,2$ the function $\tilde{U}_i:\mathcal{P}_p(\T^{2n_0})\rightarrow \R$ with $\tilde{U}_i(m)=U(\pi^i_*m)$ is also $C^1$ with 
\be \label{regularity 2}
\frac{\delta \tilde{U}_i}{\delta m}(m,x_1,x_2)=\frac{\delta U}{\delta \mu}(\pi^i_*m,\pi^i(x_1,x_2)),\;\text{ for every }m\in\mathcal{P}_p(\T^{2n_0}),\; x_1,x_2\in\T^{n_0}.
\ee
In particular, $D_m\tilde{U}_i(m,x_1,x_2)=D_{\mu}U(\pi^i_*m,x_i)$ for every $m\in\mathcal{P}_p(\T^{2n_0})$ and $x_1,x_2\in \T^{n_0}$, $i=1,2$.
\end{proposition}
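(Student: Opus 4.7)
The plan is to unpack the definition of the flat derivative directly, exploiting the linearity of the pushforward operation under the projection $\pi^i$. The key observation is that for any $m_1, m_2 \in \mathcal{P}_p(\T^{2n_0})$ and $t \in [0,1]$,
\[
\pi^i_*(tm_1 + (1-t)m_2) = t\, \pi^i_*m_1 + (1-t)\, \pi^i_*m_2,
\]
since pushforward commutes with convex combinations. This reduces the computation of the flat derivative of $\tilde{U}_i$ to that of $U$.

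Concretely, I would compute
\[
\lim_{t \to 0^+} \frac{\tilde{U}_i(tm_1 + (1-t)m_2) - \tilde{U}_i(m_2)}{t} = \lim_{t \to 0^+} \frac{U(t\,\pi^i_*m_1 + (1-t)\,\pi^i_*m_2) - U(\pi^i_*m_2)}{t},
\]
which, by the $C^1$ assumption on $U$, equals
\[
\int_{\T^{n_0}} \frac{\delta U}{\delta \mu}(\pi^i_*m_2, x_i)\, (\pi^i_*m_1 - \pi^i_*m_2)(dx_i).
\]
Applying the change of variables formula for pushforward measures (i.e., $\int f\, d(\pi^i_*m) = \int (f \circ \pi^i)\, dm$) to both terms on the right yields
\[
\int_{\T^{2n_0}} \frac{\delta U}{\delta \mu}\bigl(\pi^i_*m_2, \pi^i(x_1,x_2)\bigr)\, (m_1 - m_2)(dx_1, dx_2),
\]
which is exactly the claimed expression \eqref{regularity 2}.

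It then remains to verify the continuity of the candidate $\frac{\delta \tilde{U}_i}{\delta m}(m, x_1, x_2) := \frac{\delta U}{\delta \mu}(\pi^i_*m, \pi^i(x_1, x_2))$ jointly in $(m, x_1, x_2)$. This follows by composition: the pushforward map $m \mapsto \pi^i_*m$ is $1$-Lipschitz from $(\mathcal{P}_p(\T^{2n_0}), {\bf d}_p)$ to $(\mathcal{P}_p(\T^{n_0}), {\bf d}_p)$ (since $\pi^i$ is $1$-Lipschitz), $\pi^i$ is continuous, and by hypothesis $\frac{\delta U}{\delta \mu}$ is continuous on $\mathcal{P}_p(\T^{n_0}) \times \T^{n_0}$; hence the composition is continuous. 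This establishes that $\tilde{U}_i \in C^1$ with the asserted flat derivative.

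Finally, for the Wasserstein derivative: by definition $D_m \tilde{U}_i(m, x_1, x_2) = D_{(x_1, x_2)}\frac{\delta \tilde U_i}{\delta m}(m, x_1, x_2)$; since $\frac{\delta U}{\delta \mu}(\pi^i_*m, \pi^i(x_1,x_2))$ depends on $(x_1, x_2)$ only through the coordinate $x_i$, differentiating in that coordinate produces $D_\mu U(\pi^i_*m, x_i)$, as claimed. There is no substantive obstacle here; the only point requiring care is the bookkeeping of the change of variables and the verification that the pushforward map is continuous in the $p$-Wasserstein metric, which is standard.
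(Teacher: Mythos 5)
Your proof is correct and follows essentially the same route as the paper's: unpack the definition of the flat derivative, use that pushforward commutes with convex combinations of measures, apply the change-of-variables formula for pushforwards, and read off the Wasserstein derivative by differentiating in $x_i$. If anything, you are slightly more explicit than the paper about the continuity of the resulting flat derivative (via $1$-Lipschitzness of $m\mapsto\pi^i_*m$ in ${\bf d}_p$), which the paper leaves as a one-line remark.
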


\begin{proof}
We will show the result for $i=1$ as the other case can be shown with an identical argument. For simplicity in the calculations below we drop the index $i$, so $\tilde{U}_i=\tilde{U}$ and $\pi^1=\pi$. Let $m_1,m_2\in\mathcal{P}_p(\T^{2n_0})$ and $\pi_*m_1=\mu_1,\; \pi_*m=\mu_2$. We have by the properties of the pushforward
\begin{align*}
\lim_{t\to 0} \frac{ \tilde{U}((1-t)m_1+tm_2)-\tilde{U}(m_1)}{t}&=\lim_{t\to 0} \frac{ U((1-t)\mu_1+t\mu_2)-U(\mu_2)}{t}\\
&=\int_{\T^{n_0}} \frac{\delta U}{\delta \mu}(\mu_1,x_1)(\mu_2-\mu_1)(dx_1)\\
&=\int_{\T^{n_0}} \frac{\delta U}{\delta \mu}(\pi_*m_1,x_1)(\pi_*m_2-\pi_*m_1)(dx_1,dx_2)\\
&=\int_{\T^{n_0}} \frac{\delta U}{\delta \mu}(\pi_*m_1,x_1)(\pi_*m_2-\pi_*m_1)(dx_1)\\
&= \int\int_{\T^{n_0}\times\T^{n_0}} \frac{\delta U}{\delta \mu}(\pi_*m_1,\pi(x_1,x_2))(m_2-m_1)(dx_1,dx_2).
\end{align*}
Equation (\ref{regularity 2}) follows. The regularity of $\tilde{U}$ is a direct implication of \eqref{regularity 2} and the fact that $U$ is $C^1$.
\end{proof}

\section{Technical proofs}
\noindent
In this section we provide the proofs of some Propositions which were too long and would decrease the readability of the paper.

\begin{lemma}\label{parametric}
    Let $\delta>0$, $\sigma_2$ an invertible $d\times d$ matrix and $f:\R^d\times \T^d\rightarrow \R^d$, $g:\R^d\times \T^d\rightarrow \R$ two $C^1$ functions. Suppose that $u:= u(x,y): \R^d\times \T^d$ satisfies the elliptic equation 
    \be\label{elliptic}
\delta u-\text{tr}\left(\sigma_2\sigma_2^{\top}D^2_{yy}u\right) +f(x,y)\cdot D_y u= g(x,y),
    \ee
    in the classical sense. Then, $u$ is $C^1$; that is $D_xu(x,y)$ and $D_yu(x,y)$ exist and are continuous.
\end{lemma}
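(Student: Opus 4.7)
The plan is to treat $x \in \R^d$ as a parameter and argue that the solution $u(x,\cdot)$ depends differentiably on it via the standard tool of differentiating a linear parametrised elliptic equation, in the spirit of Step 2 of Proposition \ref{lemma1}. Fix $x_0 \in \R^d$ and a coordinate direction $e_i$; for a small scalar $h \neq 0$, set $V^h(y) := u(x_0 + h e_i, y) - u(x_0, y)$. Subtracting the two equations satisfied by $u(x_0 + h e_i, \cdot)$ and $u(x_0, \cdot)$, one checks that $V^h$ solves
\begin{equation*}
\delta V^h - \text{tr}\bigl(\sigma_2\sigma_2^\top D^2_{yy} V^h\bigr) + f(x_0 + h e_i, y)\cdot D_y V^h = G^h(y),
\end{equation*}
where $G^h(y) := g(x_0+he_i,y) - g(x_0,y) - \bigl(f(x_0+he_i,y)-f(x_0,y)\bigr)\cdot D_y u(x_0,y)$ satisfies $\|G^h\|_{L^\infty} \leq C|h|$ by the mean value theorem, the $C^1$ regularity of $f$ and $g$, and the boundedness of $D_y u(x_0,\cdot)$ (which holds since $u(x_0,\cdot) \in C^2(\T^d)$ classically).

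The next step is to obtain a $C^{2,\beta}$ bound on $V^h$ linear in $|h|$. Applying the maximum principle to the equation above yields $\|V^h\|_{L^\infty} \leq C|h|$. Since $f$ and $g$ are $C^1$ and $D_y u(x_0,\cdot)$ is Lipschitz in $y$, the coefficients of the linear elliptic equation satisfied by $V^h$ lie in $C^{0,\beta}$ uniformly in small $h$, and the interior Schauder estimates from \cite{gilbarg1977elliptic} give $\|V^h\|_{C^{2,\beta}} \leq C|h|$. Dividing by $h$ and invoking the Arzela-Ascoli theorem, one extracts a subsequence along which $V^h/h \to w_i(x_0,\cdot)$ in $C^2(\T^d)$. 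Passing to the limit in the equation satisfied by $V^h/h$ shows that $w_i$ is a classical solution of
\begin{equation*}
\delta w_i - \text{tr}\bigl(\sigma_2\sigma_2^\top D^2_{yy} w_i\bigr) + f(x_0, y)\cdot D_y w_i = \partial_{x_i} g(x_0, y) - \partial_{x_i} f(x_0, y)\cdot D_y u(x_0, y).
\end{equation*}
Uniqueness of classical solutions to this linear elliptic equation (again via the maximum principle) upgrades subsequential convergence to full convergence, so $\partial_{x_i} u(x_0, y)$ exists and equals $w_i(x_0, y)$.

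It remains to establish joint continuity of $D_x u$ and $D_y u$ in $(x,y)$. The $C^{2,\beta}$ estimate on $V^h$ in fact shows that $x \mapsto u(x,\cdot)$ is continuous from $\R^d$ into $C^2(\T^d)$, which immediately yields joint continuity of $D_y u(x,y)$. For $D_x u$, the same argument applied with $x_0$ replaced by an arbitrary $x$ produces $w_i(x,\cdot)$ as the unique solution of the analogous equation with data depending continuously on $x$ (in the $C^{0,\beta}$ topology, by $f, g \in C^1$ and the just-established continuity of $x \mapsto u(x,\cdot)$ in $C^2$). Stability of classical solutions of linear elliptic equations under $C^{0,\beta}$ perturbations of the data then gives continuity of $x \mapsto w_i(x,\cdot)$ in $C^2(\T^d)$, completing the proof. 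The main technical point to watch is keeping all Schauder and maximum-principle constants uniform in the parameter $x$; this is straightforward once one works in a neighbourhood of a fixed $x_0$, since there the bounds depend only on norms of $f$, $g$ and $u(x_0,\cdot)$ over a compact parameter range.
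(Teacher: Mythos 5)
Your overall strategy — differentiating the parametrised elliptic equation via difference quotients, using the maximum principle for $L^\infty$ bounds, elliptic regularity for compactness, and uniqueness of the limiting equation — coincides with the paper's. There is, however, a genuine gap in the regularity bookkeeping.

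You claim $\|V^h\|_{C^{2,\beta}} \leq C|h|$ via Schauder estimates. Schauder would require $\|G^h\|_{C^{0,\beta}(\T^d)} \leq C|h|$, not merely $\|G^h\|_{L^\infty} \leq C|h|$. For the term $a(y):= g(x_0+he_i,y)-g(x_0,y)$, under the stated hypothesis $g\in C^1$ one has $\|a\|_{L^\infty} \leq C|h|$ and $[a]_{C^{0,1}(\T^d)} = \|D_yg(x_0+he_i,\cdot)-D_yg(x_0,\cdot)\|_{L^\infty}$; the latter tends to zero by continuity of $D_yg$, but with an a priori arbitrary modulus $\omega(|h|)$. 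Interpolation then gives $[a]_{C^{0,\beta}} \leq C\,|h|^{1-\beta}\omega(|h|)^\beta$, which is not $O(|h|)$ unless $D_yg$ is Lipschitz in $x$ — a hypothesis the lemma does not impose. Consequently, the claimed $C^{2,\beta}$ bound on $V^h$, the convergence of $V^h/h$ in $C^2$, and the continuity of $x\mapsto u(x,\cdot)$ into $C^2(\T^d)$ do not follow from $f,g\in C^1$. The same issue reappears in your final paragraph, which invokes continuity of $x\mapsto \partial_{x_i}g(x,\cdot)$ into $C^{0,\beta}(\T^d)$.

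The paper works one derivative down precisely to avoid this. It establishes only $\|\tilde{u}_h\|_{C^{1,\alpha}}\leq C|h|$, which needs just $L^\infty$ control of the right-hand side (Hölder/De Giorgi-Nash-type estimates, cf.\ Chapter 8 of Gilbarg-Trudinger, rather than the Schauder theory of Chapter 6), extracts a subsequential limit of $\tilde{u}_h/|h|$ in $C^1$ only, and passes to the limit in the \emph{weak} form of the equation; the limit is identified via uniqueness of weak solutions, and elliptic regularity is not needed to upgrade it further. Continuity of $D_xu$ is treated the same way: uniform $C^{1,\alpha}$ bounds on $\bar{w}^h_i = u_{x_i}(x+h,\cdot)-u_{x_i}(x,\cdot)$, Arzela-Ascoli, and identification of the uniform limit as the zero weak solution of the homogeneous equation. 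Your proof can be repaired by replacing $C^{2,\beta}$ with $C^{1,\alpha}$ throughout and taking limits in the weak formulation, after which it essentially matches the paper's argument.
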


\begin{proof}
Fix $x\in \R^d$ and $B\subset \R^d$ a closed ball centered at $x$. In the proof we will be working in $B\times \T^d$, which is compact. Let $h\in \R^d$ such that $x+h\in B$, $\xi=\frac{h}{|h|}$ and $\tilde{u}_h(x,y)=u(x+h,y)-u(x,y)$. The function $\tilde{u}_h$ satisfies
\be \label{elliptic2}
\begin{split}
\delta \tilde{u}_h-&\text{tr}\left( \sigma_2\sigma_2^{\top}D^2_{yy}\tilde{u}_h\right) +f(x+h,y)\cdot D_y\tilde{u}_h\\
&=g(x+h,y)-g(x,y)-(f(x+h,y)-f(x,y))\cdot D_yu(x,y).
\end{split}
\ee
By the compactness of $B\times \T^d$, we have $\|D_yu(x,\cdot)\|_{L^{\infty}}\leq C$ and $|f(x+h,y)|\leq \sup_{(z,y)\in B\times \T^d}|f(z,y)|$, for some constant $C>0$. In addition, for any $y\in \T^d$
\begin{align*}
|g(x+h,y)-g(x,y)|&\leq |h|\int_0^1 |D_xg(sh+x,y)|ds\leq |h|\sup_{(z,y)\in B\times \T^d}|D_xg(z,y)|,\\
|f(x+h,y)-f(x,y)|&\leq |h|\sup_{(z,y)\in B\times \T^d}|D_xf(z,y)|.
\end{align*}
By virtue of these bounds and the maximum principle, we have that there exists a constant $C>0$ depending only on $\sigma_2,\delta, d, f,g$ and $B$ such that $\|\tilde{u}_h(x,\cdot)\|_{L^{\infty}}\leq C$. Now, standard H\"older estimates for elliptic equations (see \cite[Chapter 8]{gilbarg1977elliptic}), yield 
\begin{align}
\|\tilde{u}_h\|_{C^{1,\alpha}}\leq C( \|\tilde{u}_h\|_{L^{\infty}}+C'|h|)\leq C|h|,\label{estimate2}
\end{align}
for some $\alpha\in (0,1)$ and where $C$ depends only on $\delta,d, \sigma_2, f,g$ and $B$. On the on hand \eqref{estimate2} implies that $D_yu(x,y)$ is continuous. On the other hand, it allows us to use Arzela-Ascoli's theorem to get a uniformly convergent subsequence of $\tilde{u}_h(x,\cdot)/|h|$ in $C^1$ as $|h|\rightarrow 0$. Let $\bar{u}_{\xi}$ be the limit, which is continuous due to the uniform convergence. Since $\tilde{u}_h$ is a classical solution of \eqref{elliptic2}, it is also a weak solution, hence $\bar{u}_{\xi}$ is a weak solution of
\begin{align}
\delta \bar{u}_{\xi}-\text{tr}\left(\sigma_2\sigma_2^{\top}D^2_{yy}\bar{u}_{\xi}\right)+f(x,y)\cdot D_y\bar{u}_{\xi}=D_xg(x,y)\cdot \xi-D_xf(x,y)\xi\cdot D_yu(x,y).\label{xi}
\end{align}
Since the weak solution is unique, we deduce that $\frac{\tilde{u}_h(x,\cdot)}{|h|} \xrightarrow{h\rightarrow 0} \bar{u}_{\xi}$, therefore the directional derivatives of $u(x)$ exist and hence $D_xu(x,y)$ exists. From \eqref{estimate2} by sending $|h|\rightarrow 0$, it also follows that $D_xu(x,\cdot)\in C^{1,\alpha}(\T^d)$ with a H\"older constant which is uniform in $B$ (since $x\in B)$.\\
To prove the continuity of $D_xu(x,y)$, since $D_xu(x,\cdot)\in C^{1,\alpha}(\T^d)$ with a H\"older constant which is uniform in $B$, it suffices to show that $x\mapsto D_xu(x,y)$ is continuous in $B$. We will show that its components $u_{x_i}(x,y)$, $i=1,...,d$, are continuous with respect to $x$. Indeed, let $x\in B$ and $h\in \R^d$ such that $x+h\in B$. We consider the difference $\bar{w}^h_i(x,y):= u_{x_i}(x+h,y)-u_{x_i}(x,y)$. By choosing the appropriate $\xi= (0,...,0,1,0,...,0)$, where $1$ is in the $i$-th position in \eqref{xi}, we discover that $\bar{w}^h_i$ satisfies
\be \label{difference}
\begin{split}
& \delta \bar{w}^h_i(y)
- \text{tr}\!\left( \sigma_2\sigma_2^{\top} D^2_{yy} \bar{w}^h_i(y) \right)
+ f(x+h,y) \cdot D_y \bar{w}^h_i(y) \\
& \quad = g_{x_i}(x+h,y) - g_{x_i}(x,y) \\
& \quad\ \ \ - f_{x_i}(x+h,y) \cdot D_y u(x+h,y)
+ f_{x_i}(x,y) \cdot D_y u(x,y) \\
& \quad\ \ \ - \big( f(x+h,y) - f(x,y) \big) \cdot D_y u_{x_i}(x,y)
\end{split}
\ee
in the weak sense. Since $D_xu(x,\cdot)\in C^{1,\alpha}(\T^d)$ and $f,g\in C^1$, we observe that the coefficients on the left hand side and the right hand side of \eqref{difference} are uniformly bounded (with respect to $h$) as functions of $y$, hence again by standard H\"older estimates $\overline{w}_i^h$ is uniformly bounded in $C^{1,\alpha}$. By Arzela-Ascoli, $\overline{w}_i^h$ converges uniformly (up to a subsequence) as $|h|\rightarrow 0$ to a function $\overline{w}_i$. Since the right hand side of \eqref{difference} converges to $0$ unifromly as $|h|\rightarrow 0$, $\overline{w}$ satisfies weakly the equation
$$\delta \overline{w}_i-\text{tr}\left(\sigma_2\sigma_2^{\top}D^2_{yy}\overline{w}_i\right)+f(x,y)\cdot D_y\overline{w}_i=0.$$
\vspace{-2mm}

\noindent
We notice that $\overline{w}_i=0$ is the unique weak solution to this equation, hence $\overline{w}_i^{h}\xrightarrow{|h|\rightarrow 0}0$ uniformly. We conclude that $\lim_{|h|\rightarrow 0} \| u_{x_i}(x+h,\cdot)-u_{x_i}(x,\cdot)\|_{L^{\infty}}=0$, which gives us the desired continuity.
\end{proof}

\vspace{2mm}

\noindent
We, finally, prove a regularization result for functions defined on the Wasserstein space $\mathcal{P}_p(\T^d)$, $p \geq 1$. The idea comes from \cite[Chapter 5]{carmona2018probabilistic}.
\begin{lemma}\label{regularization}
Let $\Phi:\mathcal{P}_p(\T^d)\rightarrow \R$ be a function, which is Lipschitz and continuously differentiable. For a smooth symmetric density on $\R^d$ with compact support and $\delta>0$, let
$$\rho_{\delta}(x):=\frac{1}{\delta^d}\rho\left(\frac{x}{\delta}\right),\quad x\in \R^d$$
and $\Phi^{\delta}(m):=\Phi(m \ast \rho_{\delta})$, $m\in\mathcal{P}_p(\T^d)$. Then, $\Phi^{\delta}$ is continuously differentiable with
$$\frac{\delta\Phi^{\delta}}{\delta m}(m,x)=\left( \frac{\delta\Phi}{\delta m}(m\ast\rho_{\delta},\cdot)\ast \rho_{\delta}\right)(x),\quad x\in \T^d,\; m\in\mathcal{P}_p(\T^d).$$
Moreover, if $\frac{\delta\Phi}{\delta m}(m,\cdot)\in C^2$, then there exist a positive constant $C:=C(d,p,\rho)$ and an increasing function $\omega: [0,+\infty)\rightarrow [0,+\infty)$ with $\lim_{t\rightarrow 0^+}\omega(t)=\omega(0)=0$ such that
\be\label{est101}
\sup_{m\in\mathcal{P}_p(\T^d)}|\Phi^{\delta}(m)-\Phi(m)|+\sup_{m\in\mathcal{P}_p(\T^d)}\left\| \frac{\delta\Phi^{\delta}}{\delta m}(m,\cdot)-\frac{\delta\Phi}{\delta m}(m,\cdot)\right\|_{C^2(\T^d)}\leq C\omega(\delta)
\ee
\end{lemma}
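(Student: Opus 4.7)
The plan is to prove the two claims in turn, starting with the derivative formula. The key observation is that $m\mapsto m\ast\rho_\delta$ is linear, so for $m_1,m_2\in\mathcal{P}_p(\T^d)$ and $t\in(0,1]$,
\[
(tm_1+(1-t)m_2)\ast\rho_\delta = t(m_1\ast\rho_\delta)+(1-t)(m_2\ast\rho_\delta).
\]
Applying the defining property of $\frac{\delta\Phi}{\delta m}$ at the point $m_2\ast\rho_\delta$ yields
\[
\lim_{t\to 0^+}\frac{\Phi^\delta(tm_1+(1-t)m_2)-\Phi^\delta(m_2)}{t}=\int_{\T^d}\frac{\delta\Phi}{\delta m}(m_2\ast\rho_\delta,x)\,d[(m_1-m_2)\ast\rho_\delta](x).
\]
A direct Fubini computation combined with the symmetry $\rho_\delta(-y)=\rho_\delta(y)$ will transfer the mollifier onto $\frac{\delta\Phi}{\delta m}(m_2\ast\rho_\delta,\cdot)$, producing exactly the formula in the statement. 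Continuity of $(m,x)\mapsto\bigl(\frac{\delta\Phi}{\delta m}(m\ast\rho_\delta,\cdot)\ast\rho_\delta\bigr)(x)$ then follows from ${\bf d}_p$-continuity of $m\mapsto m\ast\rho_\delta$, continuity of $\frac{\delta\Phi}{\delta m}$, and the smoothness of $\rho_\delta$.

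For the first half of \eqref{est101}, I will produce a coupling witnessing ${\bf d}_p(m,m\ast\rho_\delta)\leq C\delta$: take $X\sim m$ and $Y$ independent with density $\rho_\delta$, so that $X+Y\sim m\ast\rho_\delta$ and $|Y|\leq C\delta$ a.s.\ by compact support of $\rho$. Lipschitz continuity of $\Phi$ then gives $|\Phi^\delta(m)-\Phi(m)|\leq\mathrm{Lip}(\Phi)\cdot C\delta$, uniformly in $m$.

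For the $C^2$ bound, write $h:=\frac{\delta\Phi}{\delta m}$ and decompose
\[
\bigl(h(m\ast\rho_\delta,\cdot)\ast\rho_\delta\bigr)(x)-h(m,x)=\underbrace{\bigl([h(m\ast\rho_\delta,\cdot)-h(m,\cdot)]\ast\rho_\delta\bigr)(x)}_{(\mathrm{I})}+\underbrace{(h(m,\cdot)\ast\rho_\delta)(x)-h(m,x)}_{(\mathrm{II})}.
\]
Term $(\mathrm{II})$ is a classical mollification error: since $h(m,\cdot)\in C^2(\T^d)$ and $D^k$ commutes with convolution, $\|(\mathrm{II})\|_{C^2}\leq\omega_1(\delta)$ for a modulus of continuity $\omega_1$ of $D^2 h(m,\cdot)$. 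For $(\mathrm{I})$, the elementary estimate $\|f\ast\rho_\delta\|_{C^2}\leq\|f\|_{C^2}$ (derivatives commute with convolution and $\|\rho_\delta\|_{L^1}=1$) reduces matters to bounding $\|h(m\ast\rho_\delta,\cdot)-h(m,\cdot)\|_{C^2}$, which I control via continuity of $m\mapsto h(m,\cdot)$ in $C^2(\T^d)$ together with ${\bf d}_p(m\ast\rho_\delta,m)\leq C\delta$, yielding a bound $\omega_2(C\delta)$.

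The main obstacle is upgrading the $m$-dependent moduli $\omega_1,\omega_2$ into a single modulus $\omega$ uniform over $\mathcal{P}_p(\T^d)$. This will be handled by invoking compactness of $\mathcal{P}_p(\T^d)$ (Remark \ref{remarkappend}(ii)) combined with continuity of $m\mapsto h(m,\cdot)$ in $C^2(\T^d)$, which promotes pointwise continuity to uniform continuity on $\mathcal{P}_p(\T^d)$. Taking $\omega$ to be the maximum of $\omega_1$, $t\mapsto\omega_2(Ct)$, and $t\mapsto\mathrm{Lip}(\Phi)\cdot Ct$, and absorbing prefactors into $C=C(d,p,\rho)$, will give \eqref{est101}.
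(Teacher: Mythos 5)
Your proof is correct and structurally parallels the paper's: the derivative formula via linearity of $m\mapsto m\ast\rho_\delta$ and symmetry of $\rho$, the $C^2$ bound via the same two-term triangle-inequality split (mollify the measure, then mollify in space), and the use of compactness of $\mathcal{P}_p(\T^d)$ to upgrade pointwise moduli of continuity to a uniform one. The one place you genuinely diverge is in bounding the Wasserstein distance between $m$ and $m\ast\rho_\delta$. You construct an explicit coupling $(X, X+Y)$ with $X\sim m$, $Y$ independent with law $\rho_\delta(y)\,dy$, and use compact support of $\rho$ to get ${\bf d}_p(m,m\ast\rho_\delta)\le C\delta$ directly. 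The paper instead passes through the metric equivalence ${\bf d}_p\le C\,{\bf d}_1^{1/p}$ (Remark~\ref{remarkappend}(iii)) and the Kantorovich dual formula for ${\bf d}_1$, landing on the weaker estimate $\delta^{1/p}$. Your coupling is both shorter and sharper (it produces a rate linear in $\delta$ regardless of $p$), though either suffices since only $\omega(\delta)\to 0$ is needed. Everything else matches the paper's argument, including the reliance on joint continuity of $\frac{\delta\Phi}{\delta m}$ and its $x$-derivatives to extract uniform moduli.
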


\begin{proof}
The proof for the derivative can be found in \cite[Chapter 5]{carmona2018probabilistic} and the proof for the first term of \eqref{est101} follows from the Lipschitz continuity of $\Phi$ as in \cite[Lemma 4.2]{daudin2024optimal}. Indeed, for $m\in \mathcal{P}_p(\T^d)$ and $L_{\Phi}$ the Lipschitz constant of $\Phi$
\begin{align*}
|\Phi^{\delta}(m)-\Phi(m)|&\leq L_{\Phi} {\bf d}_p(m\ast\rho_{\delta},m)\leq L_{\Phi} {\bf d}_1(m\ast\rho_{\delta},m)^{1/p}\\
&\leq L_{\Phi}\left( \sup_{f\in\text{ 1-Lip}}\int _{\T^d}(f\ast\rho_{\delta}(x)-f(x))m(dx)\right)^{1/p}\leq L_{\Phi}\delta^{1/p}.
\end{align*}
For the second term of \eqref{est101} we argue similarly. Since the functions $D_xD_m\Phi(x,m)$ and $D_m\Phi(x,m)$ are continuous, by the compactness of $\T^d\times \mathcal{P}_p(\T^d)$, they must also be uniformly continuous, hence we may consider $\omega_{D_m\Phi}$ and $\omega_{D_xD_m\Phi}$ the corresponding moduli of continuity. We have for any $(m,x)\in \mathcal{P}_p(\T^d)\times \T^d$
\begin{align*}
|D_m\Phi(m\ast \rho_{\delta},\cdot)\ast \rho_{\delta}(x)-D_m\Phi(m,x)|\leq |D_m\Phi(m\ast \rho_{\delta},\cdot)&\ast \rho_{\delta}(x)-D_m\Phi(m,\cdot)\ast \rho_{\delta}(x)|\\
&+|D_m\Phi(m,\cdot)\ast \rho_{\delta}(x)-D_m\Phi(m,x)|.
\end{align*}
However,  
\begin{align*}
|D_m\Phi(m,\cdot)\ast \rho_{\delta}(x)-D_m\Phi(m,x)|&\leq \int_{\T^d}|D_m\Phi(m,x-y)-D_m\Phi(m,x)|\rho_{\delta}(y)dy\\
&\leq \int_{\T^d}\omega_{D_m\Phi}(|y|)\rho_{\delta}(y)dy\leq C\omega_{D_m\Phi}(\delta).
\end{align*}
and
\begin{align*}
|D_m\Phi(m\ast \rho_{\delta},\cdot)\ast \rho_{\delta}(x)-D_m\Phi(m,\cdot)\ast \rho_{\delta}(x)|&\leq \int_{\T^d} |D_m\Phi(m\ast \rho_{\delta},y)-D_m\Phi(m,y)|\rho_{\delta}(x-y)dy\\
&\leq \int_{\T^d} \omega_{D_m\Phi}({\bf d}_p(m\ast\rho_{\delta},m))\rho_{\delta}(x-y)dy\\
&\leq \int_{\T^d} \omega_{D_m\Phi}(\delta^{1/p})\rho_{\delta}(x-y)dy\\
&\leq C\omega_{D_m\Phi}(\delta^{1/p}).
\end{align*}
Thus, 
$$|D_m\Phi(m\ast \rho_{\delta},\cdot)\ast \rho_{\delta}(x)-D_m\Phi(m,x)|\leq C\max\left\{\omega_{D_m\Phi}(\delta^{1/p}),\; \omega_{D_m\Phi}(\delta)\right\}.$$
This shows the inequality for the first derivative. For the second derivative, we argue similarly but with the modulus $\omega_{D_xD_m\Phi}$. The proof is complete.
\end{proof}

\noindent
\textbf{Acknowledgements.} The author was partially supported by P. E. Souganidis’ NSF grant DMS-1900599, ONR grant N000141712095, and AFOSR grant FA9550-18-1-0494. The author wishes to thank Joe Jackson for insightful discussions regarding Theorem \ref{joe} and its proof.

\bibliographystyle{is-alpha}		
\bibliography{mfg}

\end{document}